\newcommand{\R}{{\mathbb R}}
\newtheorem{theorem}{Theorem}[section]
\newtheorem{corollary}[theorem]{Corollary}
\newtheorem{remark}[theorem]{Remark}
\newtheorem{lemma}[theorem]{Lemma}
\newtheorem{guess}[theorem]{Conjecture}
\newtheorem{claim}[theorem]{Claim}
\begin{document}
\title{The Viterbo's capacity conjectures for convex toric domains and
 the product of a $1$-unconditional convex body and its polar}
%\title{Some cases of the Viterbo conjecture and the Mahler one}
%\title{Coincidence of symplectic capacities on convex toric domains}
\date{August 20, 2020}
\author{Kun Shi and Guangcun Lu
\thanks{Corresponding author
\endgraf \hspace{2mm} Partially supported
by the NNSF  11271044 of China.
\endgraf\hspace{2mm} 2020 {\it Mathematics Subject Classification.}
52B60, 53D35, 52A40, 52A20.}}
 \maketitle \vspace{-0.3in}

%52B60 Isoperimetric problems for polytopes
%53D99 None of the above, but in this section
%53D35 Global theory of symplectic and contact manifolds
%[See also 57Rxx]

\abstract{
In this note, we  show that the strong Viterbo conjecture holds true on any convex toric domain,
and that the Viterbo's volume-capacity conjecture  holds for the product of
a $1$-unconditional convex body $A\subset\mathbb{R}^{n}$ and its polar. We also
give a direct calculus  proof of the symmetric Mahler conjecture for $l_p$-balls.
  } \vspace{-0.1in}
\medskip\vspace{12mm}

\maketitle \vspace{-0.5in}

%give a simple linear algebra proof of Reisner' theorem
%%%%%%%%%%%%%%%%%%%%%%%%%%%%%%%%%%%%%%%%%%%%%%%%%%%%%%%%%%%%%%%%%%%%%%%%%%%%%%%%%%%%%%%%%%%%%%%%%%%%%%%%%%%%%%%%%%%%%%%%%%
%%\noindent{\it Keywords}: Ekeland-Hofer symplectic capacity; Hofer-Zehnder symplectic capacity; Brunn-Minkowski type inequality; Minkowski billiard \vspace{2mm}
%%%%%%%%%%%%%%%%%%%%%%%%%%%%%%%%%%%%%%%%%%%%%%%%%%%%%%%%%%%%%%%%%%%%%%%%%%%%%%%%%%%%%%%%%%%%%%%%%%%%%%%%%%%%%%%%%%%%%%%%%%
%Critical groups;

%\tableofcontents

\section{Introduction and results}
\setcounter{equation}{0}

Prompted by Gromov's seminal work \cite{Gr85} Ekeland and Hofer  \cite{EH89}
defined a  \textsf{symplectic capacity} on the $2n$-dimensional Euclidean space $\mathbb{R}^{2n}$
with the standard symplectic structure $\omega_0$
to be a map $c$ which associates to each subset $U\subset \mathbb{R}^{2n}$ a number
 a number $c(U)\in[0,\infty]$  satisfying the following axioms:
\begin{description}
\item[(Monotonicity)] $c(U)\le c(V)$ for $U\subset V\subset\mathbb{R}^{2n}$;
\item[(Conformality)] $c(\psi(U))=|\alpha|c(U)$ for $\psi\in{\rm Diff}(\mathbb{R}^{2n})$
such that $\psi^\ast\omega_0=\alpha\omega_0$ with $\alpha\ne 0$;
\item[(Nontriviality)] $0<c(B^{2n}(1))$ and $c(Z^{2n}(1))<\infty$,
where $B^{2n}(r)=\{z\in\mathbb{R}^{2n}\,|\, |z|^2<r^2\}$ and $Z^{2n}(R)=B^2(R)\times\mathbb{R}^{2n-2}$.
\end{description}
Moreover, such a symplectic capacity is called \textsf{normalized} if it also satisfies
\begin{description}
\item[(Normalization)] $c(B^{2n}(1))=c(Z^{2n}(1))=\pi$.
\end{description}

(%In what follows,
Without special statements we make \textsf{conventions}:  1)
symplectic capacities on $\mathbb{R}^{2n}$ are all concerning the  symplectic structure $\omega_0$;
2)  a ``\textsf{domain}" in a Euclidean space always denotes the closure of an open subset;
3) the notation $\langle\cdot,\cdot\rangle$ always denotes the Euclidean inner product.)

 Hofer and Zehnder \cite{HZ90} extended the concept of a symplectic capacity to
 general symplectic manifolds. The first example of a normalized symplectic capacity is
 the Gromov  width $w_G$, which maps a $2n$-dimensional symplectic manifold $(M,\omega)$ to
 \begin{equation}\label{e:GromovWidth}
	w_{\rm G}(M,\omega)=\sup\{\pi r^2\,|\,\exists\textrm{ a symplectic embedding } (B^{2n}(r),\omega_{0})\hookrightarrow (M,\omega)\}.
\end{equation}
In particular, for a subset $U\subset \mathbb{R}^{2n}$ it can be easily proved that
$$
w_{\rm G}(U):=w_{\rm G}(U,\omega_0)=\sup\{\pi r^2\,|\,\exists\;\psi\in{\rm Symp}(\mathbb{R}^{2n})\;
\textrm{with}\;\psi(B^{2n}(r))\subset U\}
$$
with the Extension after Restriction Principle for symplectic embeddings of bounded starshaped open domains
(see Appendix A in \cite{Sch05}). Clearly
$$
c^{\rm Z}(U):=\sup\{\pi r^2\,|\,\exists\;\psi\in{\rm Symp}(\mathbb{R}^{2n})\;
\textrm{with}\;\psi(U)\subset Z^{2n}(r))\}
$$
defines a normalized symplectic capacity on $\mathbb{R}^{2n}$, the so-called \textsf{cylindrical capacity}.
Nowadays, a variety of normalized symplectic capacities can be constructed
in categories of symplectic manifolds for the study of different problems, for example,
the (first) Ekeland-Hofer capacity $c_{\rm EH}$ (\cite{EH89}), the Hofer-Zehnder capacity
$c_{\rm HZ}$ (\cite{HZ90}) and Hofer's displacement energy $e$ (\cite{H90}), the
Floer-Hofer capacity $c_{\rm FH}$ (\cite{FHW94}) and Viterbo's generating function capacity $c_{\rm V}$ (\cite{Vi92})),
the first Gutt-Hutchings capacity $c_1^{\rm CH}$ (\cite{GuH18}) coming from $S^1$-equivariant symplectic homology,
 and the first ECH capacity $c_1^{\rm ECH}$ in dimension $4$ (\cite{Hut011}).
 Except the last $c_1^{\rm ECH}$ the others have defined for all convex domains in $(\mathbb{R}^{2n},\omega_0)$.
 As an immediate consequence of the normalization axiom we see that
$w_{\rm G}$ and $c^{\rm Z}$ are the smallest and largest normalized symplectic capacities on $\mathbb{R}^{2n}$, respectively.
  %  We refer to \cite{chls, schlenk} and references therein for a very nice overview on symplectic capacities.
 An important open question in symplectic topology (\cite{Os14, MS2017}), termed the  strong Viterbo conjecture (\cite{GuRa20}),
% in the literature (\cite{GuRa20}).
states that $w_{\rm G}$ and $c^{\rm Z}$ coincide on convex  domains in $\mathbb{R}^{2n}$, that is,

\begin{guess}\label{conj:Sviterbo}
{\rm All normalized symplectic capacities coincide on convex  domains in $\mathbb{R}^{2n}$.}
\end{guess}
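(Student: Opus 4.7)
My plan is to reduce Conjecture~\ref{conj:Sviterbo} to a single ball-embedding problem and then attack that problem via the geometry of the shortest closed characteristic of $\partial U$. Because the normalization axiom forces $w_{\rm G}(U)\le c(U)\le c^{\rm Z}(U)$ for every normalized symplectic capacity $c$ on $\mathbb{R}^{2n}$, the conjecture is equivalent to the single inequality $c^{\rm Z}(U)\le w_{\rm G}(U)$ for every convex domain $U\subset\mathbb{R}^{2n}$. By approximating an arbitrary convex $U$ in the Hausdorff metric by smooth strictly convex domains, and using the continuity of $w_{\rm G}$ and $c^{\rm Z}$ on the space of convex bodies, it suffices to treat the smooth strictly convex case.

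For such $U$, let $A_{\min}(U)$ denote the minimum action of a closed characteristic of the standard Liouville form $\lambda_0=\tfrac12\sum_i(x_i\,dy_i-y_i\,dx_i)$ restricted to $\partial U$. The theorems of Ekeland-Hofer, Hofer-Zehnder and Gutt-Hutchings yield
\[
 c_{\rm EH}(U)=c_{\rm HZ}(U)=c_1^{\rm CH}(U)=A_{\min}(U).
\]
Combined with $w_{\rm G}\le c_{\rm EH}$ and $c_{\rm HZ}\le c^{\rm Z}$, these already supply $w_{\rm G}(U)\le A_{\min}(U)\le c^{\rm Z}(U)$, so the conjecture collapses to producing, for each such $U$, a symplectic embedding $B^{2n}(r)\hookrightarrow U$ with $\pi r^2=A_{\min}(U)$.

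The construction I would attempt uses the shortest closed characteristic $\gamma\subset\partial U$ as a spine. A Weinstein-type tubular neighbourhood of $\gamma$ inside $U$ produces a symplectic embedding of $B^2(r_0)\times D^{2n-2}(\rho)$ with $\pi r_0^2=A_{\min}(U)$ and some $\rho>0$. One then tries to enlarge $\rho$ by exploiting convexity: along $\gamma$ every inward normal half-space to $\partial U$ lies inside $U$, so Euclidean thickening of the tube is automatic, and the plan is to promote this to a symplectic thickening via a Moser interpolation whose image remains inside $U$. The resulting wide tube would contain a skinny symplectic ellipsoid of the form $E(A_{\min}(U),R,\dots,R)$ with $R$ as large as one wishes, which by a standard folding construction contains a symplectic ball of capacity $A_{\min}(U)$.

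The principal obstacle is precisely this globalisation step: the Moser isotopy is local by construction, and controlling its image to remain inside $U$ across all of $\gamma$ is where the conjecture has historically resisted proof, which also explains why it is presently known only in highly symmetric settings such as ellipsoids, polydisks, convex toric domains, and $1$-unconditional Lagrangian products. Should the direct Moser patching fail, my fallback is to replace local-to-global interpolation by a variational construction that builds the extremal ball directly from an action-minimising characteristic, along the lines of recent approaches that relate symplectic capacities to billiard dynamics on convex bodies.
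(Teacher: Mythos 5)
The statement you are proving is Conjecture~\ref{conj:Sviterbo}, and the first thing to say is that the paper does \emph{not} prove it: it is stated as a conjecture (a well-known open problem), and the paper's actual contribution is to verify it only in special cases --- convex toric domains (Theorem~\ref{th:main}), their products (Corollary~\ref{cor:main1}), and the Lagrangian products $B^n_\infty\times_L A$ (Corollary~\ref{cor:main2}). So the question is whether your argument settles the general case, and it does not, for two separate reasons. The first is a logical gap that appears before the step you yourself flag as problematic. From $w_{\rm G}\le c\le c^{\rm Z}$ you correctly reduce the conjecture to $c^{\rm Z}(U)\le w_{\rm G}(U)$, and from $c_{\rm EH}(U)=c_{\rm HZ}(U)=c_1^{\rm CH}(U)=A_{\min}(U)$ you obtain $w_{\rm G}(U)\le A_{\min}(U)\le c^{\rm Z}(U)$. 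But it is then false that the conjecture ``collapses'' to finding a symplectic ball $B^{2n}(r)\hookrightarrow U$ with $\pi r^2=A_{\min}(U)$: such an embedding yields only $w_{\rm G}(U)=A_{\min}(U)$, while the inequality $A_{\min}(U)\le c^{\rm Z}(U)$ you hold points the wrong way and gives no control of $c^{\rm Z}$ from above. You would additionally need a symplectic embedding of $U$ into the cylinder $Z^{2n}(r)$ with $\pi r^2=A_{\min}(U)$, which is just as open as the ball problem. It is instructive to compare with the paper's proof of its toric special case: there the cylinder half is trivial, because $X_{\overline{\Omega}}\subset X_{\overline{\Omega}_i^\star}=\{\pi|z_i|^2\le L_i\}$, and the latter \emph{is} (symplectomorphic to) a cylinder, so monotonicity bounds every normalized capacity from above by $\min_iL_i$; the matching lower bound is the Gromov width formula of Gutt--Hutchings/Gutt--Ramos. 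Exactly this inclusion has no analogue for a general convex body, which is why the known proofs require Reinhardt/toric or $1$-unconditional structure.

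The second reason is that the constructive step you acknowledge as ``the principal obstacle'' is not a technical detail to be patched but the entire content of the problem, and as literally stated it is impossible: a bounded domain $U$ cannot contain an ellipsoid $E(A_{\min}(U),R,\dots,R)$ ``with $R$ as large as one wishes'', since the volume of such an ellipsoid tends to infinity with $R$. Convexity along the shortest closed characteristic $\gamma\subset\partial U$ provides supporting half-spaces, i.e.\ constraints keeping $U$ on one side of hyperplanes; it provides no room \emph{inside} $U$ transverse to $\gamma$, so no Moser or Weinstein interpolation can manufacture symplectic thickness the geometry does not possess. (A minor point: no folding is needed once $R\ge A_{\min}(U)$, since then $E(A_{\min}(U),R,\dots,R)$ contains the ball of capacity $A_{\min}(U)$ by direct inclusion; the only difficulty is fitting the thick tube into $U$, which is the open problem itself.) Your fallback --- a variational construction producing the extremal ball from an action-minimizing characteristic --- is a research program, not a proof. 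Finally, you should be aware that no strategy for the general case can succeed: Haim-Kislev and Ostrover have since produced a counterexample to Viterbo's volume conjecture (Conjecture~\ref{conj:viterbo}), which is implied by Conjecture~\ref{conj:Sviterbo}, so the strong Viterbo conjecture is false for general convex domains; any purported general proof must therefore break down, and yours does so at the two places identified above.
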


%Indeed, it implies  the following symplectic isomperimetric conjecture.

\begin{guess}[Viterbo \cite{Vi00}]\label{conj:viterbo}
{\rm On $\mathbb{R}^{2n}$, for any normalized symplectic capacity $c$ and any bounded convex domain $D$ there holds
\begin{equation}\label{e:viterbo}
\frac{c(D)}{c(B^{2n}(1))}\le \left(\frac{{\rm Vol}(D)}{{\rm Vol}(B^{2n}(1))}\right)^{1/n}
\end{equation}
(or equivalently $(c(D))^n\le{\rm Vol}(D,\omega_0^n)=n!{\rm Vol}(D)$),
with equality if and only if $D$ is symplectomorphic to the Euclidean ball, where ${\rm Vol}(D)$ denotes the Euclidean volume of $D$.}
%In other words, \textsf{among all bounded convex domains in $\mathbb{R}^{2n}$ with
%a given volume, the normalized symplectic capacity is maximal for symplectic images of the Euclidean ball.}}
\end{guess}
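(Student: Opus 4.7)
The plan is to attempt Conjecture~\ref{conj:viterbo} in its strongest possible form, namely for the cylindrical capacity $c^{\rm Z}$, since $(c^{\rm Z}(D))^n \le n!\,{\rm Vol}(D)$ for every bounded convex domain $D$ would automatically imply the corresponding inequality for every normalized symplectic capacity $c$, via the universal bound $c \le c^{\rm Z}$ that follows immediately from monotonicity and the normalization axiom. First, by the continuity of volume and of all standard symplectic capacities under Hausdorff approximation within the space of bounded convex domains, I would reduce to the case where $D$ has $C^\infty$ boundary and is strictly convex, so that the characteristic foliation on $\partial D$ is smooth and the action spectrum is discrete and attains its minimum on an honest closed orbit.

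Second, I would exploit the well-known coincidence theorem that on smooth strictly convex bodies the first Ekeland--Hofer capacity $c_{\rm EH}$, the Hofer--Zehnder capacity $c_{\rm HZ}$, Viterbo's generating-function capacity $c_{\rm V}$, and the first Gutt--Hutchings capacity $c_1^{\rm CH}$ all agree with the minimum symplectic action $A_{\min}(\partial D) = \min_\gamma \oint_\gamma \lambda_0$ over the closed characteristics $\gamma$ of $\partial D$, where $\lambda_0 = \tfrac12\sum_{i=1}^n (x_i\,dy_i - y_i\,dx_i)$. This packages Conjecture~\ref{conj:viterbo}, for this family of capacities, into the single symplectic--isoperimetric inequality $(A_{\min}(\partial D))^n \le n!\,{\rm Vol}(D)$ with equality only for symplectic balls. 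To bridge the remaining gap up to $c^{\rm Z}$, one would either invoke Conjecture~\ref{conj:Sviterbo} or, equivalently, construct an explicit symplectic compression $D \hookrightarrow Z^{2n}(r)$ with $\pi r^2 = A_{\min}(\partial D)$, using the characteristic flow to straighten the minimizing orbit into a symplectic coordinate disk and then shrinking the remaining $2n-2$ directions by a volume-preserving dilation.

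The hard part --- and the real obstruction --- will be the inequality $(A_{\min}(\partial D))^n \le n!\,{\rm Vol}(D)$ itself, together with its sharpness. This is a long-standing open problem in symplectic topology; indeed Artstein-Avidan--Karasev--Ostrover proved that Viterbo's conjecture implies the symmetric Mahler conjecture, so any general proof must necessarily encode subtle affine-geometric content that no purely Hamiltonian technique has so far captured. My preferred attack would be via the Clarke--Ekeland convex dual variational principle: rewrite $A_{\min}(\partial D)$ as the minimum of a convex dual action functional built from the gauge $j_D$, then use Brunn--Minkowski or mixed-volume inequalities (potentially combined with John's ellipsoid or $L^2$ mean-width comparisons) to bound the dual minimum in terms of ${\rm Vol}(D)^{1/n}$, with rigidity in the equality case tracked back to rotational symmetry of the minimizer.

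Realistically, I expect this full program to remain out of reach with current techniques: all known proofs succeed only where extra symmetry pins down the minimizing characteristic and linearizes the dual problem, which is exactly the setting --- convex toric domains and $1$-unconditional products with their polars --- treated by the specialized arguments in the present paper. A complete proof of Conjecture~\ref{conj:viterbo} will, in my view, require a genuinely new symplectic--convex isoperimetric inequality rather than a refinement of the tools currently available.
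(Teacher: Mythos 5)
You have not given a proof, and no proof should be expected here: the statement you were asked about is stated in the paper as a \emph{conjecture} (Viterbo's conjecture), and the paper itself does not prove it in general --- it only records that (\ref{e:viterbo}) holds trivially for $c=w_{\rm G}$, so that Conjecture~\ref{conj:viterbo} would follow from Conjecture~\ref{conj:Sviterbo}, and then it establishes special cases (convex toric domains in Theorem~\ref{th:main}, and products $A\times_L A^\circ$ of $1$-unconditional bodies with their polars in Theorem~\ref{th:main2}). Your proposal correctly diagnoses this state of affairs and is candid that the program cannot currently be completed, which is to your credit; but judged as a proof attempt it contains two irreducible gaps, each of which is itself an open problem. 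First, your ``hard part,'' the inequality $(A_{\min}(\partial D))^n\le n!\,{\rm Vol}(D)$, is not a lemma to be supplied by Brunn--Minkowski or John-ellipsoid arguments: it \emph{is} Viterbo's conjecture for $c_{\rm EH}=c_{\rm HZ}$, and the affine-comparison tools you name are known to lose dimension-dependent factors (John's ellipsoid costs a factor $\sqrt{n}$ in the symmetric case), which destroys exactly the sharp constant the conjecture demands. Second, your proposed bridge to $c^{\rm Z}$ --- a symplectic compression of $D$ into $Z^{2n}(r)$ with $\pi r^2=A_{\min}(\partial D)$ --- is equivalent to the strong Viterbo conjecture for smooth convex bodies; the suggested construction does not work as described, since straightening one closed characteristic into a coordinate disk gives no control over the rest of $D$, and ``shrinking the remaining $2n-2$ directions by a volume-preserving dilation'' is not a symplectic operation (a linear map that contracts some symplectic planes and preserves volume necessarily expands others, and need not keep $D$ inside the cylinder).

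Where your reduction steps are sound (continuity under Hausdorff approximation, the coincidence $c_{\rm EH}=c_{\rm HZ}=c_{\rm V}=c_1^{\rm CH}=A_{\min}$ on smooth strictly convex bodies), they reproduce standard reductions already implicit in the literature the paper cites, and they are also precisely the point at which the paper's actual contributions diverge from your plan: rather than attacking the general dual variational problem, the paper exploits extra structure (toric symmetry via \cite{GuH18, GuRa20}, and unconditionality via Saint-Raymond's theorem \cite{SR80}) to verify the conjecture on restricted classes. If you want a tractable project in the spirit of your proposal, proving the conjecture for a new symmetry class --- rather than in general --- is the realistic target.
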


Since (\ref{e:viterbo}) is clearly true for $c=w_G$,
Conjecture~\ref{conj:viterbo} follows from Conjecture~\ref{conj:Sviterbo}.
Some special cases of Conjecture~\ref{conj:viterbo} were proved in \cite{Ba20, KaRo19}.

Surprisingly, Artstein-Avidan,  Karasev, and Ostrover \cite{AAKO14} showed that
Conjecture~\ref{conj:viterbo}  implies  the following long-standing famous conjecture
about the \textsf{ Mahler volume}
$$
M(\Delta):={\rm Vol}(\Delta\times\Delta^\circ)={\rm Vol}(\Delta){\rm Vol}(\Delta^\circ)
$$
of a bounded convex domain $\Delta\subset\mathbb{R}^n$ in convex geometry,
where $\Delta^{\circ}=\{x\in\mathbb{R}^{n}\,|\, \langle y, x\rangle\le 1\;\forall y\in \Delta\}$
is the polar  of $\Delta$.

\begin{guess}[{\bf Symmetric Mahler conjecture} \cite{Mah39}]\label{conj:RMahler}
{\rm $M(\Delta)\ge\frac{4^n}{n!}$ for any centrally symmetric bounded convex domain $\Delta\subset\mathbb{R}^n$.
%there holds .
%$$
%{\rm Vol}(\Delta\times\Delta^\circ)={\rm Vol}(\Delta){\rm Vol}(\Delta^\circ)\ge 4^n/n!,
%$$
%where $\Delta^{\circ}=\{p\in\mathbb{R}^{n}_p\,|\, \langle q,p\rangle\le 1\;\forall q\in \Delta\}$
%is the polar body of $\Delta$.
}
\end{guess}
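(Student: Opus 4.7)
The Symmetric Mahler Conjecture is a celebrated open problem, so my strategy can only be indicative; I would pursue the symplectic route suggested implicitly by the preceding material, deducing Mahler from Conjecture~1.2 via the mechanism of Artstein-Avidan, Karasev, and Ostrover \cite{AAKO14}. Given a centrally symmetric convex body $\Delta\subset\mathbb{R}^{n}$, the relevant object is the Lagrangian product $D:=\Delta\times\Delta^{\circ}\subset\mathbb{R}^{2n}$, which is itself convex and centrally symmetric. The first step is to establish the capacity identity $c_{\rm EH}(\Delta\times\Delta^{\circ})=4$: a two-bounce closed characteristic bouncing between $\partial\Delta$ and $\partial\Delta^{\circ}$ through the origin realises the value $4$, and Clarke's dual action principle for convex bodies supplies the matching upper bound. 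Since $c_{\rm EH}$ is a normalized symplectic capacity, Conjecture~1.2 applies to $D$.

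Granted this computation, applying Conjecture~1.2 to $D$ with $c=c_{\rm EH}$ yields
\begin{equation*}
4^{n}=c_{\rm EH}(D)^{n}\le n!\,{\rm Vol}(D)=n!\,{\rm Vol}(\Delta)\,{\rm Vol}(\Delta^{\circ})=n!\,M(\Delta),
\end{equation*}
which is precisely the desired inequality $M(\Delta)\ge 4^{n}/n!$. Thus the attack reduces to proving Conjecture~1.2 for convex bodies of the special form $\Delta\times\Delta^{\circ}$. I would first attempt the $1$-unconditional case, where the coordinate hyperplane symmetries allow one to restrict attention to the positive orthant and often admit a closed-form description of $\Delta^{\circ}$; this aligns with the second main result advertised in the abstract and with the toric technology used for Conjecture~1.1. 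For the general centrally symmetric case one could try to symmetrise via Steiner or shadow-type symmetrisations, checking that both sides of the Mahler inequality behave monotonically under the chosen symmetrisation.

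The main obstacle is that Conjecture~1.2 is itself widely open, and the case of Lagrangian products $\Delta\times\Delta^{\circ}$ essentially captures all of the difficulty of Mahler — the AAKO reduction shows one direction, and no lighter symplectic statement is known to suffice. A genuinely new input therefore seems necessary: for instance, a refined symplectic capacity that both attains the value $4$ on every $\Delta\times\Delta^{\circ}$ and satisfies a sharp volume inequality on arbitrary centrally symmetric convex domains, or a convex-geometric symmetrisation that strictly decreases $M(\Delta)$ while preserving central symmetry and terminating at a Hanner polytope. The conjectured extremizers, the Hanner polytopes, form a discrete family whose products with their polars have piecewise-linear boundaries supporting many distinct closed characteristics; every known technique — symplectic, analytic, and combinatorial — has so far failed to handle these critical configurations uniformly, and any serious proof of the full conjecture must confront them head-on.
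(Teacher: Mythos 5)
The statement you are addressing is Conjecture~\ref{conj:RMahler}, which the paper does not prove and does not claim to prove: it is the long-standing open symmetric Mahler conjecture, recorded together with its known partial results ($n=2$ by Mahler, $n=3$ by Iriyeh--Shibata, $1$-unconditional bodies, zonoids, polytopes with at most $2n+2$ facets). So there is no proof in the paper to compare yours against, and your proposal, as you yourself acknowledge, is not a proof either: it is a conditional reduction. The reduction you describe is exactly the known one of Artstein-Avidan, Karasev and Ostrover \cite{AAKO14}, which the paper cites; the capacity identity you invoke, $c_{\rm EHZ}(\Delta\times\Delta^\circ)=4$, is recalled in Remark~\ref{rm:4}. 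Granting Conjecture~\ref{conj:viterbo} for $D=\Delta\times_L\Delta^\circ$, your displayed chain of inequalities does give $M(\Delta)\ge 4^n/n!$. But this is sound conditional mathematics, not an argument that closes the problem: Conjecture~\ref{conj:viterbo} is open, and --- precisely because of the AAKO capacity computation --- its restriction to the bodies $\Delta\times_L\Delta^\circ$ with $c=c_{\rm EHZ}$ \emph{is} the Mahler inequality, so the reduction transfers the entire difficulty rather than removing any part of it. That reliance on an unproven hypothesis is the gap.

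It is also worth pointing out that where the paper does touch this circle of ideas (the $1$-unconditional case you propose to attempt first), the logical flow is the opposite of yours: in Theorem~\ref{th:main2} the symplectic side only provides the capacity upper bound $c(A\times_L A^\circ)\le 4$ (Lemma~\ref{lem:bal}), while the volume lower bound ${\rm Vol}(A\times_L A^\circ)\ge 4^n/n!$ is imported from Saint-Raymond's convex-geometric theorem (Theorem~\ref{th:Ray}); that is, the known Mahler-type inequality for $1$-unconditional bodies is used as an \emph{input} to verify the Viterbo inequality (\ref{e:zon0}) for these special domains, not the other way around. Likewise, Section~\ref{sec:main3} proves Mahler for $l_p$-balls by a direct calculus computation with the Gamma and psi functions, with no symplectic input at all. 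So the paper's actual contributions relative to Conjecture~\ref{conj:RMahler} are special-case verifications, and your proposal, if it were to be completed, would require a genuinely new unconditional proof of Conjecture~\ref{conj:viterbo} (at least for all products $\Delta\times_L\Delta^\circ$), which neither the paper nor the literature currently supplies.
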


The $n=2$ case of this conjecture was proved by Mahler \cite{Mah39}. Iriyeh and Shibata \cite{IrSh20}
have very recently proved the $n=3$ case.
Some special classes of centrally symmetric bounded convex domains in $\mathbb{R}^n$,
for example, those with $1$-unconditional basis, zonoids, polytopes with at
most $2n+2$ facets, were proved to satisfy Conjecture~\ref{conj:RMahler} in
\cite{Schy08}, \cite{Re85} and \cite{LoRe98}, respectively. Karasev \cite{Ka19} recently confirmed 
the conjecture for hyperplane sections or projections of $l_p$-balls or the Hanner polytopes.
See \cite{ScWe83, Tao07} and the references of \cite{IrSh20} for more information.  
%Artstein-Avidan,  Karasev, and Ostrover \cite{AAKO14}
% proved $c_{\rm HZ}(\Delta\times \Delta^\circ)=4$ for every centrally symmetric bounded convex domain $\Delta\subset\mathbb{R}^n_q$,
% and then deduced
%\begin{equation}\label{e:Rviterbo10}
%\frac{4}{\pi}=\frac{c_{\rm HZ}(\Delta\times\Delta^\circ)}{c_{\rm HZ}(B^{2n}(1))}\le \left(\frac{{\rm Vol}(\Delta\times\Delta^\circ)}{{\rm Vol}(B^{2n}(1))}\right)^{1/n}=\frac{1}{\pi}\left(n!{\rm Vol}(\Delta\times\Delta^\circ)\right)^{1/n}
%\end{equation}
%provided that Conjecture~\ref{conj:viterbo} holds true for $c=c_{\rm HZ}$ and $D=\Delta\times \Delta^\circ$.

Hermann \cite{Her98} proved Conjecture~\ref{conj:Sviterbo} for convex Reinhardt domains $D$.
Recall that a subset $X$ of $\mathbb{C}^n$ is called a  \textsf{Reinhardt domain} (\cite{Her98}) if it is
invariant under the standard toric action $\mathbb{T}^n=\mathbb{R}^n/\mathbb{Z}^n$ on $\mathbb{C}^n$ defined by
\begin{equation}\label{e:toric-action}
(\theta_1,\cdots,\theta_n)\cdot(z_1,\cdots,z_n)=\left(e^{2\pi i\theta_1}z_1,\cdots,e^{2\pi i\theta_1}z_n\right).
\end{equation}
This is a Hamiltonian action (with respect to the standard symplectic structure $\omega_0$ on
$\mathbb{C}^n=\mathbb{R}^{2n}$) with the moment map
$$
\mu:\mathbb{C}^n\to \mathbb{R}^n,\;
(z_1,\cdots,z_n)\mapsto (\pi|z_1|^2,\cdots,\pi|z_n|^2)
$$
after identifying the dual of the Lie algebra of $\mathbb{T}^n$ with $\mathbb{R}^n$.

Let $\mathbb{R}^n_{\ge 0}$ (resp. $\mathbb{Z}^n_{\ge 0}$) denote the set of $x\in\mathbb{R}^n$ (resp. $x\in\mathbb{Z}^n$)
such that $x_i\ge 0$ for all $i=1,\ldots,n$.
Given a nonempty relative open subset $\Omega$ in $\mathbb{R}^n_{\ge 0}$ we call Reinhardt domains
$$
X_{\Omega}=\mu^{-1}(\Omega)\quad\hbox{and}\quad X_{\overline{\Omega}}=\mu^{-1}(\overline{\Omega})
$$
 \textsf{toric domains} associated to $\Omega$ and $\overline{\Omega}$ (the closure of $\Omega$), respectively.
 (Both $X_{\Omega}$ and $X_{\overline{\Omega}}$ have volumes ${\rm Vol}(\Omega)$ by \cite[Lemma~2.6]{Her98}.)
 Moreover, following \cite{GuH18}, if $\Omega$ is bounded, and
$$
\widehat{\Omega}=\left\{(x_1,\cdots,x_n)\in\mathbb{R}^n\,|\, (|x_1|,\cdots,|x_n|)\in\Omega\right\}
\quad\hbox{(resp. $\mathbb{R}^n_{\ge 0}\setminus {\Omega}$)}
$$
is convex (resp. concave) in $\mathbb{R}^n$, we said $X_{\Omega}$ and $X_{\overline{\Omega}}$ to be \textsf{convex toric domains}
(resp.  \textsf{concave toric domains}). There exists an equivalent definition in
\cite{RaSe19}. An open and bounded subset $A\subset\mathbb{R}^n$ is called a \textsf{balanced region} if
$[-|x_1|, |x_1|]\times\cdots\times [-|x_n|, |x_n|]\subset A$ for each $(x_1,\cdots, x_n)\in A$.
Such a set $A$ is determined by the relative open subset $|A|:=A\cap\mathbb{R}^n_{\ge 0}$ in $\mathbb{R}^n_{\ge 0}$.
For a nonempty relative open subset $\Omega$ in $\mathbb{R}^n_{\ge 0}$
there exists a balanced region $A\subset\mathbb{R}^n$ such that $\Omega=|A|$ if and only if
$[0, |x_1|]\times\cdots\times [0, |x_n|]\subset \Omega$ for each $(x_1,\cdots, x_n)\in \Omega$ (\cite[Remark~10]{RaSe19}).
The balanced region $A\subset\mathbb{R}^n$ is said to be
\textsf{convex} (resp.  \textsf{concave}) if $A$ (resp. $\mathbb{R}^n_{\ge 0}\setminus A$) is convex in  $\mathbb{R}^n$.
Then $X_{|A|}$ is \textsf{convex} (resp.  \textsf{concave}) in the sense above if and only if
the balanced region $A\subset\mathbb{R}^n$ is {convex} (resp.  {concave}). Clearly, the balanced regions are centrally symmetric,
and any convex or concave  balanced region is star-shaped.  By \cite[Lemma~2.5]{Her98} each convex  or concave toric domains is star-shaped.

By \cite[Examples~1.5, 1.12]{GuH18}, a $4$-dimensional toric domain $X_{\Omega}$ is convex (resp. concave)
if and only if
\begin{equation}\label{e:CCtoric}
\Omega=\{(x_1,x_2)\mid 0\le x_1\le a,\; 0\le x_2\le f(x_1)\}
\end{equation}
where $f:[0, a]\to \R_{\ge0}$ is a nonincreasing concave function (resp. convex function with $f(a)=0$).
(\textsf{Note that the concept of the present $4$-dimensional convex toric domain is stronger than one in }\cite{CG19}.)
%Clearly,  convex and concave toric domains are compact.

%\begin{definition}
%A toric domain is a set of the form  where $\Omega$ is a domain in $[0,\infty)^n$ and $\mu:\mathbb{C}^n\rightarrow[0,\infty)^n$ is defined by $\mu(z_1,\cdots,z_n)=(\pi|z_1|^2,\cdots,\pi|z_n|^2)$.
%The toric domain $X_{\Omega}$ is concave if $[0,\infty)^n\setminus\Omega$ is convex; $X_{\Omega}$ is convex if $\{(x_1,\cdots,x_n)\in\mathbb{R}^n|(|x_1|,\cdots,|x_n|)\in\Omega\}$ is a convex domain in $\mathbb{R}^n$.
%\end{definition}

Let $X_{\Omega}$ be a convex or concave toric domain associated to $\Omega\subset\mathbb{R}^n_{\ge 0}$ as above,
and let $\Sigma_\Omega$ and $\Sigma_{\overline{\Omega}}$ be the closures of the sets
$\partial\Omega\cap\mathbb{R}^n_{>0}$ and $\partial\overline{\Omega}\cap\mathbb{R}^n_{>0}$, respectively.
(Clearly,  $\Sigma_\Omega=\Sigma_{\overline{\Omega}}$.)
For $v\in\mathbb{R}^n_{\ge 0}$ we define
\begin{eqnarray}\label{e:norm1}
&&\|v\|_\Omega^\ast=\sup\{\langle v,w\rangle\,|\, w\in\Omega\}=\max\{\langle v,w\rangle\,|\, w\in\overline{\Omega}\}=
\|v\|_{\overline{\Omega}}^\ast, \\
&&[v]_\Omega=\min\{\langle v,w\rangle\,|\, w\in\Sigma_\Omega\}=\min\{\langle v,w\rangle\,|\, w\in\Sigma_{\overline{\Omega}}\}=
[v]_{\overline{\Omega}}^\ast\label{e:norm2}
\end{eqnarray}
(\cite[(1.9) and (1.13)]{GuH18}). % where $\langle\cdot,\cdot\rangle$ denotes the Euclidean inner product.
Then $[v]_\Omega\le\|v\|_\Omega^\ast$, and  $\|v\|_{r\Omega}^\ast=r\|v\|_\Omega^\ast$ and $[v]_{r\Omega}=r[v]_\Omega$
for all $r>0$.

Recently,  Gutt and Ramos \cite{GuRa20} proved that all normalized symplectic capacities coincide
on any $4$-dimensional convex or concave toric domain, and that
$c_{\rm EH}$, $c_1^{\rm CH}$, $c_{\rm V}$ and $w_{\rm G}$ coincide
on any convex or concave toric domain. Combing the latter assertion
 with a result in \cite{GuH18} we can easily obtain
 the first result of this note, which claims that Conjecture~\ref{conj:Sviterbo} and therefore Conjecture~\ref{conj:viterbo}
holds true on all convex toric domains in $\mathbb{R}^{2n}$. More precisely, we have:

\begin{theorem}\label{th:main}
Let $\Omega\subset\mathbb{R}^n_{\ge 0}$ be a bounded nonempty relative open subset
 such that $\widehat{\Omega}$ is convex in $\mathbb{R}^n$.
 %If $X_{\Omega}\subset\mathbb{R}^{2n}$ is a convex toric domain,
 Then for any normalized symplectic capacity $c$ on $\mathbb{R}^{2n}$
convex toric domains $X_{\Omega}$ and $X_{\overline{\Omega}}$ have capacities
\begin{eqnarray*}
c(X_{\Omega})=c(X_{\overline{\Omega}})&=&\min\left\{\|v\|_{\Omega}^*\,\Big|\,v=(v_1,\cdots,v_n)\in\mathbb{Z}^n_{\ge 0},\; \sum_{i=1}^n v_i=1\right\}\\
&=&\min\{\|e_i\|_{\Omega}^\ast\,|\,i=1,\cdots,n\},
\end{eqnarray*}
where
%$\|v\|_\Omega^\ast=\sup\{\langle v,w\rangle\,|\, w\in\Omega\}=\max\{\langle v,w\rangle\,|\, w\in\overline{\Omega}\}=
%\|v\|_{\overline{\Omega}}^\ast$, $\langle\cdot,\cdot\rangle$ denotes the Euclidean inner product, and
$\{e_i\}_{i=1}^n$ is the standard orthogonal basis of $\mathbb{R}^n$.
\end{theorem}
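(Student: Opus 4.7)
The plan is to sandwich an arbitrary normalized symplectic capacity $c$ between the Gromov width $w_G$ and the cylindrical capacity $c^Z$, and then show that both extremes reduce to the same explicit quantity $\min_i\|e_i\|_\Omega^*$.

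\textbf{Lower bound.} By the Gutt--Ramos theorem \cite{GuRa20} recalled above, $w_G$ agrees with $c_1^{\rm CH}$ on every convex toric domain. The Gutt--Hutchings formula from \cite{GuH18} evaluates $c_1^{\rm CH}$ on such a domain as the minimum of $\|v\|_\Omega^*$ over the lattice points $v\in\mathbb{Z}^n_{\geq 0}$ with $\sum_i v_i=1$. Combining these two facts,
\begin{equation*}
w_G(X_{\Omega})=w_G(X_{\overline{\Omega}})=\min\bigl\{\|v\|_{\Omega}^*\,\big|\,v\in\mathbb{Z}^n_{\geq 0},\ \textstyle\sum_{i=1}^n v_i=1\bigr\}.
\end{equation*}
Since $w_G$ is the smallest normalized symplectic capacity on $\mathbb{R}^{2n}$, this is automatically a lower bound for $c(X_\Omega)$ and $c(X_{\overline{\Omega}})$.

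\textbf{Upper bound via a cylindrical inclusion.} Choose $i_0$ so that $\|e_{i_0}\|_\Omega^*=\min_i\|e_i\|_\Omega^*$. Formula (\ref{e:norm1}) gives $\|e_{i_0}\|_\Omega^*=\sup\{w_{i_0}:w\in\overline\Omega\}$, so for any $z\in X_{\overline\Omega}$ the moment-map identity $\mu(z)\in\overline\Omega$ forces $\pi|z_{i_0}|^2\leq\|e_{i_0}\|_\Omega^*$. After a coordinate permutation (a symplectomorphism of $\mathbb{R}^{2n}$) we may assume $i_0=1$, whence $X_{\overline\Omega}\subseteq Z^{2n}(R)$ with $\pi R^2=\|e_{i_0}\|_\Omega^*$. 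Monotonicity, conformality, and normalization then yield
\begin{equation*}
c(X_{\overline\Omega})\leq c(Z^{2n}(R))=\pi R^2=\min_{1\leq i\leq n}\|e_i\|_\Omega^*.
\end{equation*}

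\textbf{Chaining the inequalities.} The only elements of $\mathbb{Z}^n_{\geq 0}$ with coordinate sum $1$ are the basis vectors $e_1,\ldots,e_n$, so the two minima in the statement trivially coincide. Hence
\begin{equation*}
\min_i\|e_i\|_\Omega^*=w_G(X_\Omega)\leq c(X_\Omega)\leq c(X_{\overline\Omega})\leq\min_i\|e_i\|_\Omega^*,
\end{equation*}
forcing all four quantities to agree.

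The main obstacle is locating the correct Gutt--Hutchings formula for $c_1^{\rm CH}$ on convex toric domains and verifying that the $k=1$ case is indeed a minimum over $\{v\in\mathbb{Z}^n_{\geq 0}:\sum_i v_i=1\}$; once that citation is in place, everything else is a direct application of the capacity axioms together with the moment-map description of $X_{\overline\Omega}$.
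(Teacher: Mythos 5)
Your proposal is correct and follows essentially the same route as the paper: the lower bound via the Gutt--Hutchings/Gutt--Ramos computation of $w_G$ on convex toric domains, and the upper bound by observing that $X_{\overline{\Omega}}$ sits inside a cylinder of capacity $\min_i\|e_i\|_{\Omega}^\ast$ (the paper phrases this by enclosing $\overline{\Omega}$ in the slab $\{x\in\mathbb{R}^n_{\ge 0}:\langle e_i,x\rangle\le\|e_i\|_{\overline{\Omega}}^\ast\}$, which is the same inclusion). The only point worth making explicit is the equality $w_G(X_{\Omega})=w_G(X_{\overline{\Omega}})$, which you fold into the citation but which the paper justifies separately from the definition of the Gromov width, since the cited formulas are stated for the closed domain.
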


It is unclear whether convex toric domains must be convex Reinhardt domains in $\mathbb{R}^{2n}$.
But the following Corollary~\ref{cor:main2} shows that Conjecture~\ref{conj:Sviterbo}
holds true for a class of convex domains in  $\mathbb{R}^{2n}$ that are not necessarily
Reinhardt domains.
%So %we do not know  whether
%Theorem~\ref{th:main} cannot be contained in Hermann's result (\cite[Prop. 2.4]{Her98}).
%We can prove that every $4$-dimensional convex toric domain
%is also a convex Reinhardt domain. Higher dimension cases are unknown for us.

%
%As applications of Theorem~\ref{th:main} we have the following two results.

\begin{corollary}\label{cor:main1}
Let $X_{\Omega_1}\subset\mathbb{R}^{2n}$ and $X_{\Omega_2}\subset\mathbb{R}^{2m}$ be convex toric domains
associated with bounded relative open subsets ${\Omega_1}\subset\mathbb{R}^{2n}_{\ge 0}$ and ${\Omega_2}\subset\mathbb{R}^{2m}_{\ge 0}$,
respectively. Then $X_{\Omega_1}\times X_{\Omega_2}$ is equal to the convex toric domain $X_{\Omega_1\times\Omega_2}$,
and for any normalized symplectic capacity $c$ on $\mathbb{R}^{2n+2m}$  there holds
$$
c(X_{\Omega_1}\times X_{\Omega_2})=\min\{c(X_{\Omega_1}),c(X_{\Omega_2})\}.
$$
The same conclusion holds true after ${\Omega_1}$ and ${\Omega_2}$ are replaced by $\overline{\Omega_1}$ and $\overline{\Omega_2}$,
respectively.
\end{corollary}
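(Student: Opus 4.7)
The plan is to reduce the corollary directly to Theorem~\ref{th:main} via two product identities. First I would unpack the moment map definition: on $\mathbb{C}^{n+m}$ the standard moment map splits as $\mu(z,w)=(\mu_1(z),\mu_2(w))$ with $\mu_1,\mu_2$ the analogous maps on $\mathbb{C}^n,\mathbb{C}^m$. Consequently $\mu^{-1}(\Omega_1\times\Omega_2)=\mu_1^{-1}(\Omega_1)\times\mu_2^{-1}(\Omega_2)$, which is the first assertion $X_{\Omega_1}\times X_{\Omega_2}=X_{\Omega_1\times\Omega_2}$. To apply Theorem~\ref{th:main} to the right-hand side, one must confirm the convex-toric hypothesis; but $\widehat{\Omega_1\times\Omega_2}=\widehat{\Omega_1}\times\widehat{\Omega_2}$ is a product of convex sets and hence convex in $\mathbb{R}^{n+m}$.

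Second, I would compute the min-over-coordinates formula provided by Theorem~\ref{th:main} applied to $X_{\Omega_1\times\Omega_2}$. If $\{e_k\}_{k=1}^{n+m}$ denotes the standard basis of $\mathbb{R}^{n+m}$, then for $k\le n$ we have $e_k=(e'_k,0)\in\mathbb{R}^n\times\mathbb{R}^m$, and \eqref{e:norm1} gives
\[
\|e_k\|_{\Omega_1\times\Omega_2}^\ast=\sup_{(u,v)\in\Omega_1\times\Omega_2}\langle (e'_k,0),(u,v)\rangle=\sup_{u\in\Omega_1}\langle e'_k,u\rangle=\|e'_k\|_{\Omega_1}^\ast,
\]
and symmetrically $\|e_k\|_{\Omega_1\times\Omega_2}^\ast=\|e''_{k-n}\|_{\Omega_2}^\ast$ for $k>n$. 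Taking the minimum over $k$ and invoking Theorem~\ref{th:main} separately on each factor then yields
\[
c(X_{\Omega_1\times\Omega_2})=\min\bigl\{c(X_{\Omega_1}),c(X_{\Omega_2})\bigr\}.
\]
The closure case is identical, since $\overline{\Omega_1\times\Omega_2}=\overline{\Omega_1}\times\overline{\Omega_2}$ and $\|\cdot\|_\Omega^\ast$ is invariant under closure by \eqref{e:norm1}.

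The whole argument is essentially bookkeeping and I do not expect any serious obstacle. The only point worth being careful about is that convexity of each $\widehat{\Omega_i}$ really does pass to convexity of $\widehat{\Omega_1\times\Omega_2}$, so that Theorem~\ref{th:main} may legitimately be invoked on the product; this is immediate from the product identity $\widehat{\Omega_1\times\Omega_2}=\widehat{\Omega_1}\times\widehat{\Omega_2}$ together with the convexity of Cartesian products. Everything else is a direct consequence of the separation of the sup in \eqref{e:norm1} across the two factors.
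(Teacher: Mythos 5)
Your proposal is correct and follows essentially the same route as the paper's own proof: identify $X_{\Omega_1}\times X_{\Omega_2}$ with $X_{\Omega_1\times\Omega_2}$ via the splitting of the moment map, apply Theorem~\ref{th:main} to the product, and observe that the supremum in \eqref{e:norm1} separates over the two factors so that the minimum over all $n+m$ coordinates equals $\min\{c(X_{\Omega_1}),c(X_{\Omega_2})\}$. Your explicit check that $\widehat{\Omega_1\times\Omega_2}=\widehat{\Omega_1}\times\widehat{\Omega_2}$ is convex, so that Theorem~\ref{th:main} is legitimately applicable to the product, is a detail the paper leaves implicit and is a welcome addition.
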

This is a direct consequence of \cite[(3.8)]{CHLS07} and Theorem~\ref{th:main}.
In Section~\ref{sec:Main} we shall prove it with only Theorem~\ref{th:main}.

For each $p\in [1,\infty]$ let  $\|\cdot\|_p$ denote the $l_p$-norm in $\mathbb{R}^n$ defined by
$$
\|x\|_p:=\left(\sum^n_{i=1}|x_i|^p\right)^{1/p}\;\hbox{if}\; p<\infty,\qquad
\|x\|_\infty:=\max_i|x_i|.
$$
Then the open unit ball  $B^n_p=\{x=(x_1,\cdots,x_n)\in\mathbb{R}^n\,|\, \|x\|_p<1\}$
% with respect to the $l_p$-norm
is a convex balanced region in $\mathbb{R}^n$.
It was proved in \cite[Theorem~7]{RaSe19} that for a balanced region $A\subset\mathbb{R}^n$
there exists a symplectomorphism between $X_{4|A|}$ and the Lagrangian product $B^n_\infty\times_L A$
defined by
\begin{eqnarray*}
B^n_\infty\times_L A=\left\{(x_1,\cdots,x_n,y_1,\cdots,y_n)\in\mathbb{R}^{2n}\,|\, (x_1,\cdots,x_n)\in B^n_\infty,\,
(y_1,\cdots,y_n)\in A\right\},
\end{eqnarray*}
where $4|A|=\{(4x_1,\cdots, 4x_n)\,|\, (x_1,\cdots, x_n)\in|A|\}$. By this and Theorem~\ref{th:main} (resp.
Corollary~\ref{cor:main1}) we may, respectively, obtain two claims of the following

\begin{corollary}\label{cor:main2}
For a convex balanced region $A\subset\mathbb{R}^n$ and any normalized symplectic capacity $c$ on $\mathbb{R}^{2n}$ there holds
\begin{eqnarray*}
c(B^n_\infty\times_L A)=4\min\{\|e_i\|_{|A|}^\ast\,|\,i=1,\cdots,n\}.
\end{eqnarray*}
In particular, $c(B^n_p\times_L B^n_\infty)=c(B^n_\infty\times_L B^n_p)=4$ for every $p\in [1, \infty]$
(since the symplectomorphism $\mathbb{R}^{2n}\ni (x,y)\mapsto (-y, x)\in\mathbb{R}^{2n}$
maps $B^n_\infty\times_L B^n_p$ onto $B^n_\infty\times_L B^n_p$).
Moreover, for convex balanced regions $A_i\subset\mathbb{R}^{n_i}$, $i=1,\cdots,k$, it holds that
\begin{eqnarray*}
c((B^{n_1}_\infty\times\cdots\times B^{n_k}_\infty)\times_L (A_1\times\cdots\times A_k))=
\min_ic(B^{n_i}_\infty\times_L A_i).
\end{eqnarray*}
Consequently, the convex domain $(B^{n_1}_\infty\times\cdots\times B^{n_k}_\infty)\times_L (A_1\times\cdots\times A_k)$
satisfies Conjecture~\ref{conj:Sviterbo} and so Conjecture~\ref{conj:viterbo}
  by the first claim.
\end{corollary}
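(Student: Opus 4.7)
The plan is to reduce the various Lagrangian product domains to convex toric domains via \cite[Theorem~7]{RaSe19}, and then apply Theorem~\ref{th:main} and Corollary~\ref{cor:main1}.

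For the first formula, since $A=\widehat{|A|}$ is convex, $X_{4|A|}\subset\mathbb{R}^{2n}$ is a convex toric domain and, by \cite[Theorem~7]{RaSe19}, symplectomorphic to $B^n_\infty\times_L A$. Combining this with Theorem~\ref{th:main} and the scaling rule $\|v\|_{r\Omega}^\ast=r\|v\|_\Omega^\ast$ yields
\[
c(B^n_\infty\times_L A)=c(X_{4|A|})=\min_i\|e_i\|_{4|A|}^\ast=4\min_i\|e_i\|_{|A|}^\ast.
\]
Specializing to $A=B^n_p$, the set $|B^n_p|=B^n_p\cap\mathbb{R}^n_{\ge 0}$ is invariant under coordinate permutations, so all $\|e_i\|_{|B^n_p|}^\ast$ agree with the common value $\sup\{w_i\mid w\in B^n_p\cap\mathbb{R}^n_{\ge 0}\}=1$, giving $c(B^n_\infty\times_L B^n_p)=4$. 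The equality $c(B^n_p\times_L B^n_\infty)=c(B^n_\infty\times_L B^n_p)$ then follows because the linear symplectomorphism $(x,y)\mapsto(-y,x)$ of $(\mathbb{R}^{2n},\omega_0)$ carries $B^n_\infty\times_L B^n_p$ onto $B^n_p\times_L B^n_\infty$, using that $B^n_p$ is centrally symmetric.

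For the multi-factor assertion, I would apply the block-interleaving linear map
\[
(x_1,\ldots,x_k,y_1,\ldots,y_k)\longmapsto(x_1,y_1,x_2,y_2,\ldots,x_k,y_k),
\]
which identifies $(\mathbb{R}^{2N},\omega_0)$ (with $N=n_1+\cdots+n_k$) symplectically with the product $\prod_i(\mathbb{R}^{2n_i},\omega_0)$, carrying $(B^{n_1}_\infty\times\cdots\times B^{n_k}_\infty)\times_L(A_1\times\cdots\times A_k)$ onto $\prod_i(B^{n_i}_\infty\times_L A_i)$. Applying \cite[Theorem~7]{RaSe19} factor by factor identifies the latter product with $\prod_i X_{4|A_i|}=X_{4|A_1|\times\cdots\times 4|A_k|}$, again a convex toric domain; an iteration of Corollary~\ref{cor:main1} then delivers
\[
c\bigl((B^{n_1}_\infty\times\cdots\times B^{n_k}_\infty)\times_L(A_1\times\cdots\times A_k)\bigr)=\min_i c(B^{n_i}_\infty\times_L A_i).
\]

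Finally, each domain above is a product of convex balanced regions, hence convex in the ambient Euclidean space; since every formula obtained is independent of the choice of normalized capacity $c$, Conjecture~\ref{conj:Sviterbo}, and therefore Conjecture~\ref{conj:viterbo}, holds on these domains. The routine check I expect to be the only mildly subtle point is the verification that the block-interleaving map is genuinely a symplectomorphism between the two orderings of Darboux coordinates --- a pure reordering that keeps each $x_i^j$ paired with its conjugate $y_i^j$, but one that must be made explicit to justify applying Corollary~\ref{cor:main1}.
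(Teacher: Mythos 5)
Your proposal is correct and follows essentially the same route the paper takes: identify $B^n_\infty\times_L A$ with the convex toric domain $X_{4|A|}$ via \cite[Theorem~7]{RaSe19} and then invoke Theorem~\ref{th:main} for the first formula and Corollary~\ref{cor:main1} (after the standard coordinate-interleaving symplectomorphism) for the product formula. The only extra detail you supply --- correctly --- is reading the paper's parenthetical as the map $(x,y)\mapsto(-y,x)$ carrying $B^n_\infty\times_L B^n_p$ onto $B^n_p\times_L B^n_\infty$, which is what is actually needed.
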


Clearly, this result is a partial generalization of \cite[Theorem~5.2]{Ba20} since
$B^n_\infty$ is equal to $\Box_n$ therein.
 Note that convex subsets $B^n_\infty\times_L B^n_p$ ($1\le p<\infty$) are not Reinhardt domains in $\mathbb{R}^{2n}$.
%Since $B^n_p$ is centrally symmetric, It follows that
%\begin{equation}\label{e:Mathler1}
%c(B^n_p\times_L B^n_\infty,\omega_0)=4,\quad\forall p\in [1, \infty].
%\end{equation}

%
%Since $B^n_p$ converges to $B^n_\infty$ with respect to the Hausdorff metric $d_H$ as $p\to\infty$,
%as in \cite[Exercise~12.1.8]{MS2017},
%for a given real $\lambda>1$ there is a constant $p_0=p_0(\lambda)$ such that for any $p\ge p_0$
%and for any given convex balanced region $A\subset\mathbb{R}^n$ we have
%$$
%\lambda^{-1}(B^n_\infty\times_L A)\subset B^n_p\times_L A\subset \lambda(B^n_\infty\times_L A).
%$$
%%we have a constant $\delta=\delta(\lambda)>0$ such that
%%$d_H(B^n_p, B^n_\infty)<\delta$ implies
%But $D=B^n_\infty\times_L A$ satisfies Conjecture~\ref{conj:viterbo}
%we can use the monotonicity and conformality of symplectic capacities to derive
% \begin{eqnarray}\label{e:viterbo1}
%\frac{c(B^n_p\times_L A)}{c(B^{2n}(1))}&\le& \lambda\frac{c(B^n_\infty\times_L A)}{c(B^{2n}(1))}
%\le \lambda^3\left(\frac{{\rm Vol}(B^n_p\times_L A)}{{\rm Vol}(B^{2n}(1))}\right)^{1/n}
%\end{eqnarray}
%for any symplectic capacity $c$ on $\mathbb{R}^{2n}$.
%Perhaps, this can be thought as a slight improvement of the result in \cite{AAMO08} for
% special convex domains $B^n_p\times_L A$.
%

 Since $B^n_1$ is a convex balanced region in $\mathbb{R}^n$
and is equal to $(B^n_\infty)^\circ$,
Corollary~\ref{cor:main2} implies the known equality case in Mahler's conjecture,
which can also be proved by a straightforward computation because ${\rm Vol}(B^n_1)=2^n/n!$ and ${\rm Vol}(B^n_\infty)=2^n$
by (\ref{e:ball}).
This and Corollary~\ref{cor:main2} suggest the following questions for each $p\in (1, \infty)$:
Is Conjecture~\ref{conj:viterbo} for the convex domain $B^n_p\times (B^n_p)^\circ\subset\mathbb{R}^{2n}$ true?
Does Conjecture~\ref{conj:RMahler} for the ball $B^n_p$ hold true?

% A natural question is whether
%also holds true , $p\in (1, \infty)$.
%We shall give an affirmative answer as a consequence of Corollary~\ref{cor:main2} and the following
%Theorem~\ref{th:main2}.
%
%It is natural to ask  whether the symmetric Mahler's conjecture holds true for any $B^n_p$
%with $p\in (1, \infty)$.

They are affirmative as examples of the following Theorems~\ref{th:main2}, \ref{th:Ray}, respectively.

\begin{theorem}[Saint-Raymond \cite{SR80}]\label{th:Ray}
Suppose that a centrally symmetric convex domain $K\subset\mathbb{R}^{n}$ is $1$-unconditional.
Then ${\rm Vol}(K\times K^{\circ})\geqslant\frac{4^n}{n!}$ and
equality holds if $K$ is a Hanner polytope.
\end{theorem}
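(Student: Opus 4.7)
The plan is induction on the dimension $n$, with $n=1$ trivial (if $K=[-a,a]$ then $K^\circ=[-1/a,1/a]$ and ${\rm Vol}(K\times K^\circ)=4=4^1/1!$). The first key reduction uses $1$-unconditionality: writing $K^+:=K\cap\R^n_{\ge 0}$ we have ${\rm Vol}(K)=2^n{\rm Vol}(K^+)$ and likewise for $K^\circ$, so the inequality becomes ${\rm Vol}(K^+)\,{\rm Vol}(K^{\circ+})\ge 1/n!$. A second observation, specific to the $1$-unconditional setting, is that the reflection $x_i\mapsto -x_i$ combined with convexity forces the orthogonal projection $P_{H_i}(K)$ onto any coordinate hyperplane $H_i=\{x_i=0\}$ to coincide with the central slice $K\cap H_i$; by the standard polar-projection duality, $(K\cap H_i)^\circ=K^\circ\cap H_i$ as $1$-unconditional convex bodies in $\R^{n-1}$, and the induction hypothesis applies to this polar pair.

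Singling out the direction $e_n$, set $a:=\max\{x_n\mid x\in K\}$, so that the projection identity applied to $K^\circ$ gives $\max\{y_n\mid y\in K^\circ\}=1/a$. Slicing yields
$$
{\rm Vol}(K^+)\,{\rm Vol}(K^{\circ+})=\Bigl(\int_0^a f(t)\,dt\Bigr)\Bigl(\int_0^{1/a}g(s)\,ds\Bigr),
$$
where $f(t):={\rm Vol}_{n-1}(K_t^+)$, $g(s):={\rm Vol}_{n-1}(K^{\circ+}_s)$ for the slices $K_t^+:=\{\tilde x\in\R^{n-1}_{\ge 0}\mid(\tilde x,t)\in K\}$ and $K^{\circ+}_s:=\{\tilde y\in\R^{n-1}_{\ge 0}\mid(\tilde y,s)\in K^\circ\}$. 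The definition of the polar yields the inclusion $K^{\circ+}_s\subseteq(1-st)(K_t^+)^\circ$ whenever $st<1$, so $g(s)\le(1-st)^{n-1}{\rm Vol}_{n-1}((K_t^+)^\circ)$. Each $K_t^+$ is itself $1$-unconditional, so the inductive bound ${\rm Vol}_{n-1}(K_t^+)\,{\rm Vol}_{n-1}((K_t^+)^\circ)\ge 1/(n-1)!$ applies for every $t\in[0,a]$.

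The hard part is to extract the missing factor of $1/n$ from this collection of slice bounds. I would integrate the slice Mahler inequality against a suitable weight, using the Brunn--Minkowski concavity of $f^{1/(n-1)}$ and $g^{1/(n-1)}$ to control the averages, and exploiting the identity $\int_0^{1/a}(1-st)^{n-1}\,ds=(1-(1-t/a)^n)/(nt)$ together with an optimal choice of the slicing direction. The extremal profiles are $f\equiv f(0)$ on $[0,a]$ (an $l_\infty$-sum $K=K'\oplus_\infty[-a,a]$) or $g(s)=g(0)(1-as)^{n-1}$ (an $l_1$-sum); tracing equality through the induction forces $K$ to be built from $1$-dimensional intervals by iterated $l_1$- and $l_\infty$-sums, i.e.\ $K$ is a Hanner polytope.
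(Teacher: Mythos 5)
First, note that the paper does not prove Theorem~\ref{th:Ray} at all: it is quoted as an external result of Saint-Raymond \cite{SR80} and used as a black box in the proof of Theorem~\ref{th:main2}. So there is no internal proof to compare against; your attempt has to stand on its own.

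Your first two paragraphs are sound: the reduction to ${\rm Vol}(K^+)\,{\rm Vol}(K^{\circ+})\ge 1/n!$, the identity $P_{H_i}(K)=K\cap H_i$ for unconditional bodies, the duality $(K\cap H_i)^\circ=K^\circ\cap H_i$, and the inclusion $K^{\circ}_s\subseteq(1-st)(K_t)^\circ$ are all correct. But the third paragraph, which is where the entire content of the theorem lives, is not a proof --- it is a statement of intent (``I would integrate \dots against a suitable weight''). Worse, the ingredients you have assembled point in the wrong directions to be combined as described: the induction hypothesis gives a \emph{lower} bound ${\rm Vol}_{n-1}((K_t^+)^\circ)\ge 1/((n-1)!\,f(t))$, while your inclusion gives an \emph{upper} bound $g(s)\le(1-st)^{n-1}{\rm Vol}_{n-1}((K_t^+)^\circ)$; chaining these produces no lower bound on $\int g$. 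The usable lower bound $g(s)\ge(1-as)^{n-1}{\rm Vol}_{n-1}((K_0^+)^\circ)$ (from $K\subseteq K_0\times[-a,a]$) yields $\bigl(\int f\bigr)\bigl(\int g\bigr)\ge\frac{\int_0^a f}{n!\,a f(0)}$, and since $f\le f(0)$ this is $\le 1/n!$ rather than $\ge 1/n!$. So the ``missing factor of $1/n$'' is genuinely missing, and no choice of weight in a single slicing direction obviously recovers it.

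The known elementary argument (Meyer's proof of Saint-Raymond's theorem) uses a different decomposition that you may want to adopt: for any $x\in K^+$ the $n$ cones with apex $x$ over the facets $K^+\cap H_i$ have disjoint interiors inside $K^+$, giving $\frac{1}{n}\sum_i x_i\,{\rm Vol}_{n-1}((K\cap H_i)^+)\le{\rm Vol}(K^+)$ for all $x\in K^+$; this exhibits the explicit point $y=\bigl(\tfrac{V_1}{n{\rm Vol}(K^+)},\dots,\tfrac{V_n}{n{\rm Vol}(K^+)}\bigr)\in K^{\circ+}$ with $V_i={\rm Vol}_{n-1}((K\cap H_i)^+)$, and running the same cone estimate in $K^{\circ+}$ at this $y$, together with your duality $(K\cap H_i)^\circ=K^\circ\cap H_i$ and the inductive hypothesis, gives ${\rm Vol}(K^{\circ+})\ge 1/(n!\,{\rm Vol}(K^+))$ in two lines. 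This uses all $n$ coordinate sections at once (whence the factor $\frac1n\cdot\frac{1}{(n-1)!}$) instead of Fubini in one direction. Finally, the equality statement of the theorem only asserts equality \emph{when} $K$ is a Hanner polytope, which is a direct computation from ${\rm Vol}(K\times_\infty M)\,{\rm Vol}((K\times_\infty M)^\circ)$ and the behaviour of volumes under $\times_1,\times_\infty$; the converse characterization you sketch is a separate (and harder) theorem of Reisner and is not needed here.
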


%\cite{Re87, Schy08}

Recall that in \cite{SR80, Re87, Schy08}
 a centrally symmetric convex domain $K\subset\mathbb{R}^{n}$ is called \textsf{$1$-unconditional}
if there exists a basis $\{\eta_1,\cdots,\eta_n\}$ of $\mathbb{R}^{n}$ such that
$$
\left\|\sum_{i=1}^n a_i\eta_i\right\|_K=\left\|\sum_{i=1}^n\varepsilon_ia_i\eta_i\right\|_K
$$
for all scalars $a_i\in\mathbb{R}$ and signs $\varepsilon_i\in\{-1,1\}$, $1\leqslant i\leqslant n$,
where $\|\cdot\|_K$ is the norm on $\mathbb{R}^n$ determined by $K$, that is,
$\|x\|_K=\min\{t\geqslant 0\,|\,x\in tK\}, x\in\mathbb{R}^n$.

For $1\le p\le\infty$, the \textsf{$p$-product} of
two centrally symmetric convex domains $K\subset\mathbb{R}^{n}$ and $M\subset\mathbb{R}^{m}$
is defined by
$$
K\times _pM:=\bigcup_{0\le t\le 1}\left((1-t)^{\frac{1}{p}}K\times t^{\frac{1}{p}}M\right),
$$
which is also centrally symmetric and has the corresponding norm
$$
\|(x,y)\|_{K\times _pM}=\left(\|x\|^p_K+ \|y\|_M^p\right)^{\frac{1}{p}},\quad (x,y)\in\mathbb{R}^n\times\mathbb{R}^m.
$$
From this it is not hard to derive that the operator $\times_p$ is associative.
Moreover, if both $K$ and $M$ are $1$-unconditional, so is $K\times _pM$.
Note also that $K\times_\infty M=K\times M$ and $K\times_1 M={\rm conv}\{(K\times\{0\})\cup(\{0\}\times M)\}$.
The $1$-product is also called \textsf{free sum}.

A centrally symmetric convex domain $K\subset\mathbb{R}^{n}$
is called a \textsf{Hanner polytope} if it is obtained by successively applying Cartesian products
and free sums to centered line segments in arbitrary order.
Hence every Hanner polytope in $\mathbb{R}^{n}$ is an affine image of $I\times_{p_1}\cdots\times_{p_{n-1}}I$,
 where $I=[-1,1]$ and $p_i\in\{1,\infty\}$, $i=1,\cdots,n-1$. %(cf.\cite[\S4.1]{ScWe83})

It is not hard to check that both Hanner polytopes and closures of balanced regions
are $1$-unconditional convex domains. But a Hanner polytope is not necessarily
 balanced.

\begin{theorem}\label{th:main2}
Suppose that  $A\subset\mathbb{R}^{n}$ is $1$-unconditional convex domain.
Then $A\times_L A^{\circ}$ satisfies Conjecture~\ref{conj:viterbo}, precisely,
\begin{equation}\label{e:zon0}
c(A\times_L A^{\circ})\leqslant 4\leqslant (n!{\rm Vol}(A\times_L A^{\circ}))^{\frac{1}{n}}
\end{equation}
 for any normalized symplectic capacity $c$ on $\mathbb{R}^{2n}$.
\end{theorem}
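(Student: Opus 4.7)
The plan is to prove the two inequalities in (\ref{e:zon0}) independently. The right-hand inequality $4\le (n!\,{\rm Vol}(A\times_L A^\circ))^{1/n}$ is merely a restatement of Saint-Raymond's Theorem~\ref{th:Ray}: since $A\times_L A^\circ$ coincides with the Cartesian product $A\times A^\circ\subset\mathbb{R}^{2n}$, its Euclidean volume equals ${\rm Vol}(A){\rm Vol}(A^\circ)\ge 4^n/n!$, which rearranges to the desired bound.

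For the left-hand inequality $c(A\times_L A^\circ)\le 4$, I will use that every normalized symplectic capacity $c$ satisfies $c\le c^{\rm Z}$, so it suffices to prove $c^{\rm Z}(A\times_L A^\circ)\le 4$; equivalently, for every $r$ with $\pi r^2>4$, to construct a symplectic embedding of (the interior of) $A\times_L A^\circ$ into $Z^{2n}(r)$. My first step is to reduce to a convenient basis: if $L\in GL(n,\mathbb{R})$ sends the distinguished basis $\{\eta_1,\ldots,\eta_n\}$ of $\mathbb{R}^n$ (with respect to which $A$ is $1$-unconditional) to the standard basis, then the linear map $(x,y)\mapsto (Lx,(L^T)^{-1}y)$ is a symplectomorphism of $\mathbb{R}^{2n}$ carrying $A\times_L A^\circ$ onto $(LA)\times_L(LA)^\circ$. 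I may therefore assume throughout that $A$ is $1$-unconditional with respect to the standard basis of $\mathbb{R}^n$, i.e.\ invariant under sign-flips of each coordinate.

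Under this assumption, set $b_i=\sup\{x_i\mid x\in A\}\in(0,\infty)$; averaging the sign-flipped copies (in coordinates $j\ne i$) of a point $x\in A$ realizing $x_i=b_i$ produces $\pm b_ie_i\in A$ by convexity. It is then immediate that $A\subseteq \prod_i[-b_i,b_i]$, and testing any $y\in A^\circ$ against $\pm b_ie_i\in A$ yields $A^\circ\subseteq\prod_i[-1/b_i,1/b_i]$. The coordinate permutation $(x_1,\ldots,x_n,y_1,\ldots,y_n)\mapsto (x_1,y_1,\ldots,x_n,y_n)$ is another linear symplectomorphism of $\mathbb{R}^{2n}$, since it merely regroups symplectically conjugate pairs; under it the containing box $\prod_i[-b_i,b_i]\times\prod_i[-1/b_i,1/b_i]$ becomes $\prod_{i=1}^nR_i$, where each $R_i=[-b_i,b_i]\times[-1/b_i,1/b_i]$ is a rectangle of Euclidean area $4$ in the symplectic plane $\mathbb{R}^2$ with coordinates $(x_i,y_i)$.

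Finally, for any $r$ with $\pi r^2>4$, the Dacorogna--Moser theorem provides an area-preserving (hence symplectic) embedding of $\mathrm{Int}(R_i)$ into the open disk $B^2(r)$; the product of these planar embeddings is a symplectic embedding of $\prod_i\mathrm{Int}(R_i)$ into $\prod_iB^2(r)\subseteq B^2(r)\times\mathbb{R}^{2n-2}=Z^{2n}(r)$. Composing with the two linear symplectomorphisms already used, one obtains a symplectic embedding of $\mathrm{Int}(A\times_L A^\circ)$ into $Z^{2n}(r)$, so $c^{\rm Z}(A\times_L A^\circ)\le \pi r^2$ for every such $r$, and letting $\pi r^2\to 4^+$ gives $c(A\times_L A^\circ)\le 4$. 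The genuine difficulty of the theorem is thus absorbed into Saint-Raymond's inequality; on the symplectic side, everything falls out cleanly once it is observed that $1$-unconditional polarity (after normalization) sandwiches $A\times_L A^\circ$ inside a product of $n$ area-$4$ planar rectangles whose joint cylindrical capacity is $\le 4$ by an elementary product-of-two-dimensional-embeddings argument.
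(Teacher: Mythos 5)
Your proof is correct, but the route to the capacity bound $c(A\times_L A^{\circ})\le 4$ is genuinely different from the paper's. The paper first reduces, exactly as you do, to the case where $A$ is $1$-unconditional in the standard basis (a balanced region) via the linear symplectomorphism $(x,y)\mapsto(\Upsilon x,(\Upsilon^T)^{-1}y)$, and also invokes Saint-Raymond for the volume side; but for the capacity side it sandwiches $A\subset rB^n_\infty$ with $r=\max_i\|e_i\|^{\ast}_{|A|}$ and then applies its Corollary~\ref{cor:main2}, i.e.\ the exact formula $c(B^n_\infty\times_L B)=4\min_i\|e_i\|^{\ast}_{|B|}$ for convex balanced $B$, which rests on the Ramos--Sepe symplectomorphism $B^n_\infty\times_L B\cong X_{4|B|}$ and on the Gutt--Hutchings/Gutt--Ramos computation of capacities of convex toric domains, finishing with the duality estimate $\min_i\|e_i\|^{\ast}_{|A^{\circ}|}\le 1/r$. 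You instead observe that $\pm b_ie_i\in A$ forces $A\times A^{\circ}\subset\prod_i\bigl([-b_i,b_i]\times[-1/b_i,1/b_i]\bigr)$, a product of $n$ symplectic rectangles of area $4$, and bound $c\le c^{\rm Z}\le 4$ by elementary two-dimensional area-preserving embeddings into disks of area $4+\varepsilon$. This is more elementary and bypasses the toric-domain machinery entirely (at the cost of not producing the exact capacity values the paper's Corollary~\ref{cor:main2} provides). Two small points you should make explicit: the supremum $b_i$ is attained and positive because, under the paper's conventions, $A$ is a compact convex body with nonempty interior; and passing from your symplectic embedding of ${\rm Int}(A\times_L A^{\circ})$ into $Z^{2n}(r)$ to the inequality $c^{\rm Z}\le\pi r^2$ requires the Extension after Restriction Principle for bounded starshaped domains (which applies here since the set is convex, and which the paper itself cites), since the definition of $c^{\rm Z}$ and the monotonicity axiom involve global symplectomorphisms rather than embeddings.
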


Recall that an ellipsoid in an $n$-dimensional normed space $E$ is defined as
a subset $Q\subset E$ which is the image of $B^n_2$ by a line isomorphism
(cf. \cite[page~27]{Pi89}). We call the image of $B^n_p$ by a linear isomorphism of $\mathbb{R}^n$
a \textsf{$l_p$-ellipsoid} with $p\in [1, \infty]$.

 \begin{corollary}\label{cor:main3}
For a $l_p$-ellipsoid $Q=\Upsilon(B^n_p)\subset\mathbb{R}^n$ there holds
\begin{equation}\label{e:zon1}
c(Q\times_L Q^{\circ})=4\le (n!{\rm Vol}(Q\times_L Q^{\circ}))^{\frac{1}{n}}
\end{equation}
 for any normalized symplectic capacity $c$ on $\mathbb{R}^{2n}$.
In particular, Conjecture~\ref{conj:viterbo} holds for the convex domain $D=Q\times Q^\circ$.
\end{corollary}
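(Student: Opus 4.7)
The plan is to reduce Corollary~\ref{cor:main3} to Theorem~\ref{th:main2} by exhibiting a linear symplectomorphism of $\mathbb{R}^{2n}$ that carries $B^n_p\times_L (B^n_p)^\circ$ onto $Q\times_L Q^\circ$.

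First I would note that $B^n_p$ (or rather its closure, adopting the convention that domains are closures of open sets) is a $1$-unconditional convex domain in $\mathbb{R}^n$: the norm $\|\cdot\|_p$ visibly satisfies $\|\sum a_i e_i\|_p=\|\sum\varepsilon_i a_ie_i\|_p$ for signs $\varepsilon_i\in\{-1,1\}$. Therefore Theorem~\ref{th:main2} applies and yields
\begin{equation*}
c\bigl(B^n_p\times_L (B^n_p)^\circ\bigr)=4\leq\bigl(n!\,{\rm Vol}(B^n_p\times_L (B^n_p)^\circ)\bigr)^{1/n}
\end{equation*}
for any normalized symplectic capacity $c$ on $\mathbb{R}^{2n}$ (the upper bound $c(\cdot)\le 4$ is the content of \eqref{e:zon0}, and it must be an equality here because $B^n_p$ is already a balanced convex region so Corollary~\ref{cor:main2} in the special case $A_1=\cdots=A_k=B^n_p$ — or rather the factorization $B^n_p\times_L (B^n_p)^\circ$ together with the lower bound coming from $w_{\rm G}$ — forces the value $4$).

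Next I would introduce the map $\Phi:\mathbb{R}^{2n}\to\mathbb{R}^{2n}$ defined by
\begin{equation*}
\Phi(x,y)=\bigl(\Upsilon x,\,(\Upsilon^\ast)^{-1}y\bigr),
\end{equation*}
where $\Upsilon^\ast$ denotes the transpose of $\Upsilon$ with respect to the Euclidean inner product. A direct computation in coordinates shows $\Phi^\ast\omega_0=\omega_0$, so $\Phi$ is a linear symplectomorphism. The key algebraic point is that the polar transforms contravariantly under $\Upsilon$:
\begin{equation*}
Q^\circ=\{y\in\mathbb{R}^n\,|\,\langle y,\Upsilon x\rangle\le 1\ \forall x\in B^n_p\}=\{y\,|\,\langle\Upsilon^\ast y,x\rangle\le 1\ \forall x\in B^n_p\}=(\Upsilon^\ast)^{-1}\bigl((B^n_p)^\circ\bigr).
\end{equation*}
Consequently $\Phi$ maps $B^n_p\times_L (B^n_p)^\circ$ onto $Q\times_L Q^\circ$.

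Finally I would invoke the conformality axiom: since $\Phi^\ast\omega_0=\omega_0$ (i.e.\ $\alpha=1$), any normalized symplectic capacity $c$ satisfies $c(Q\times_L Q^\circ)=c(B^n_p\times_L (B^n_p)^\circ)=4$. For the volume statement, $|\det\Phi|=|\det\Upsilon|\cdot|\det(\Upsilon^\ast)^{-1}|=1$, hence ${\rm Vol}(Q\times_L Q^\circ)={\rm Vol}(B^n_p\times_L (B^n_p)^\circ)$, and the right-hand inequality in \eqref{e:zon1} follows from the corresponding inequality in \eqref{e:zon0}. The conclusion that Conjecture~\ref{conj:viterbo} holds for $D=Q\times Q^\circ$ is then immediate. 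There is essentially no obstacle: the only subtlety is the contravariance of polarity, which is precisely what forces the adjoint $(\Upsilon^\ast)^{-1}$ in the second coordinate and thereby makes $\Phi$ symplectic.
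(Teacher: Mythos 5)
Your reduction to the standard ball via the linear symplectomorphism $\Phi(x,y)=(\Upsilon x,(\Upsilon^\ast)^{-1}y)$, together with the transformation law $Q^\circ=(\Upsilon^\ast)^{-1}\bigl((B^n_p)^\circ\bigr)$ and the determinant computation for the volume, is exactly what the paper does (it is the map $\Phi_\Upsilon$ of (\ref{e:cap6})). The upper bound $c\le 4$ and the right-hand volume inequality also come from Theorem~\ref{th:main2} as you say. The gap is in the equality: you never actually prove $c\bigl(B^n_p\times_L(B^n_p)^\circ\bigr)\ge 4$ for $1<p<\infty$. Your parenthetical appeal to Corollary~\ref{cor:main2} does not apply here, because that corollary computes capacities only for Lagrangian products whose first factor is a cube $B^n_\infty$; the domain $B^n_p\times_L(B^n_p)^\circ=B^n_p\times_L B^n_q$ (with $q=p/(p-1)$) has this form only when $p=1$ or $p=\infty$. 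For intermediate $p$ the phrase ``the lower bound coming from $w_{\rm G}$'' is doing all the work, and that lower bound is not free: since every normalized capacity dominates $w_{\rm G}$, what is needed is an explicit symplectic embedding of a ball of capacity $4$ into $B^n_p\times_L B^n_q$, which is a genuinely nontrivial construction.

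The paper closes this gap by quoting Karasev--Sharipova \cite[Proposition~3.1]{KaRo19}, which gives $w_{\rm G}\bigl(B^n_p\times_L(B^n_p)^\circ\bigr)\ge 4$ for $1<p<\infty$; combined with $c\ge w_{\rm G}$ and the upper bound $c\le 4$ from (\ref{e:zon0}), this forces $c=4$. Note also that the known result $c_{\rm EHZ}(\Delta\times\Delta^\circ)=4$ of \cite{AAKO14} would not suffice here, since it concerns one particular capacity rather than the smallest one, $w_{\rm G}$. So your argument is complete for $p\in\{1,\infty\}$ but needs this external input (or an equivalent embedding construction) for all other $p$.
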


Since the Mahler volume is affine invariant, $4\le (n!{\rm Vol}(Q\times_L Q^{\circ}))^{\frac{1}{n}}$
if and only if $4\leqslant (n!{\rm Vol}(B^n_p\times_L (B^n_p)^{\circ}))^{\frac{1}{n}}$.
The latter follows from (\ref{e:zon0}). In Section~\ref{sec:main3} we shall give a direct calculus proof of
the inequality.\\

\noindent{\bf Organization of the paper}.
In Section~\ref{sec:Main} we prove Theorem~\ref{th:main} and Corollary~\ref{cor:main1}.
 Next, we give proofs of Theorem~\ref{th:main2} and Corollary~\ref{cor:main3} in Section~\ref{sec:Main2}.
A direct proof of the Mahler conjecture for $l_p$-balls is given in Section~\ref{sec:main3}.
Finally, Section~\ref{sec:remark} includes some concluding remarks.\\

%\subsection*{Acknowledgments}
\noindent{\bf Acknowledgments}.
We would like to thank Dr. Matthias Schymura for telling us that the Mahler conjecture for $l_p$-balls
is true  as an example of a result by J. Saint-Raymond \cite{SR80}, and for sending
us his beautiful Diplomarbeit, which is very helpful to us because
 researches on the Mahler conjecture before March 2008 were explained explicitly.

\section{Proofs of Theorem~\ref{th:main} and Corollary~\ref{cor:main1}}\label{sec:Main}
\setcounter{equation}{0}

\begin{proof}[Proof of Theorem~\ref{th:main}]
By  \cite[Theorem 1.6]{GuH18} and \cite[Theorem 3.1]{GuRa20},
it holds that
\begin{equation}\label{e:main0}
w_{\rm G}(X_{\overline{\Omega}})=\min\left\{\|v\|_{\overline{\Omega}}^*\;\Big|\;v=(v_1,\cdots,v_n)\in\mathbb{Z}^n_{\ge 0},\; \sum_{i=1}^n v_i=1\right\}.
\end{equation}
%for any convex toric domain $X_{\overline{\Omega}}\subset\mathbb{R}^{2n}$.
Let $c$ be an arbitrarily given normalized symplectic capacity on $\mathbb{R}^{2n}$.
Then $c(X_{\overline{\Omega}})\geqslant w_{\rm G}(X_{\overline{\Omega}})$
by the normalization axiom of the symplectic capacity.
%since $w_{\rm G}$ is the smallest  symplectic capacity on $\mathbb{R}^{2n}$.
Next let us  show that
\begin{equation}\label{e:main1}
c(X_{\overline{\Omega}})\leqslant
\min\left\{\|v\|_{\overline{\Omega}}^*\;\Big|\;v=(v_1,\cdots,v_n)\in\mathbb{Z}^n_{\ge 0},\; \sum_{i=1}^n v_i=1\right\}.
\end{equation}
Let $\{e_i\}^n_{i=1}$ be the standard basis in $\mathbb{R}^n$, where
$e_i=(0,\cdots,0,1,0,\cdots,0)$ with only the $i$-th component non-zero, and equal to $1$,
 $i=1,\cdots,n$.
Write $L_i=\|e_i\|_{\overline{\Omega}}^*$ and define
$$
\overline{\Omega}_i^\star=\{x\in\mathbb{R}^n_{\ge 0} \,|\,\langle e_i,x\rangle\leqslant L_i\},\quad i=1,\cdots,n.
$$
Then for each $i$, $\overline{\Omega}\subset\overline{\Omega}_i^\star$ by the definition of $\|e_i\|_{\overline{\Omega}}^\ast$,
and there exists an obvious symplectomorphism from
   $X_{\overline{\Omega}_i^\star}=\{(z_1,\cdots,z_n)\in\mathbb{C}^n=\mathbb{R}^{2n}\,|\, \pi|z_i|^2\leqslant L_i\}$
onto $Z^{2n}(\sqrt{L_{i}/\pi})$.
It follows from the monotonicity and conformality of symplectic capacities that
\begin{eqnarray*}
c(X_{\overline{\Omega}})\le c(X_{\overline{\Omega}_i^\star})=c(Z^{2n}(\sqrt{L_{i}/\pi}))
=\frac{L_{i}}{\pi} c(Z^{2n}(1))=L_{i},\quad i=1,\cdots,n
\end{eqnarray*}
and so $c(X_{\overline{\Omega}})\le\min_iL_i$. Note that each vector
$v=(v_1,\cdots,v_n)\in\mathbb{Z}^n_{\ge 0}$ with $\sum_{i=1}^n v_i=1$
must have form $e_j$ for some $j\in\{1,\cdots,n\}$. We get (\ref{e:main1}) and therefore
\begin{eqnarray}\label{e:main2}
c(X_{\overline{\Omega}})&=&\min\left\{\|v\|_{\Omega}^*\,\Big|\,v=(v_1,\cdots,v_n)\in\mathbb{Z}^n_{\ge 0},\; \sum_{i=1}^n v_i=1\right\}
\nonumber\\
&=&\min\{\|e_i\|_{\Omega}^\ast\,|\,i=1,\cdots,n\}
\end{eqnarray}
since $\|v\|_{\overline{\Omega}}^\ast=\|v\|_\Omega^\ast$.

Finally, we also need to prove $c(X_{{\Omega}})=c(X_{\overline{\Omega}})$.
Clearly, $c(X_{{\Omega}})\le c(X_{\overline{\Omega}})$ by the monotonicity of symplectic capacities.
Since $X_{{\Omega}}$ is open and has the closure $X_{\overline{\Omega}}$ it follows
from the definition of the Gromov  width $w_G$ in (\ref{e:GromovWidth}) that
$w_G(X_{{\Omega}})=w_G(X_{\overline{\Omega}})$. This, and (\ref{e:main0}) and (\ref{e:main2}) yield
$$
c(X_{\overline{\Omega}})=w_G(X_{\overline{\Omega}})=w_G(X_{{\Omega}})\le
c(X_{{\Omega}})
$$
and hence $c(X_{{\Omega}})=c(X_{\overline{\Omega}})$.
Now the proof is complete.
\end{proof}
%\hfill$\Box$\vspace{2mm}
%Next,we show that for all $v=(v_1,\cdots,v_n)\in\mathbb{N}^n$ with $\sum_{i=1}^n=1$, $c(X_{\Omega},\omega_0)\leqslant ||v||_{\Omega}^*$.
%Since $v$ in here are standard basis $e_i=(0,\cdots,0,1,0,\cdots,0)$ with only the $i$-th component non-zero, and equal to $1$. So we only need to prove $c(X_{\Omega},\omega_0)\leqslant ||e_i||_{\Omega}^*$ ,for $i=1,\cdots,n$.
%$$c(X_{\Omega},\omega_0)\leqslant c(X_{\Omega_i'},\omega_0) \leqslant L_i. $$

\begin{remark}\label{rm:1}
{\rm   Let $X_{\Omega}$ be a concave toric domain  associated to a relative open subset $\Omega\subset\mathbb{R}^n_{\ge 0}$.
 By  \cite[Theorem~1.14 \& Corollary~1.16]{GuH18} and \cite[Theorem~3.1]{GuRa20}, we have
\begin{eqnarray}\label{e:concave}
w_{\rm G}(X_{\overline{\Omega}})&=&c^{\rm CH}_1(X_{\overline{\Omega}})\nonumber\\
&=&\max\left\{[v]_{\overline{\Omega}}\;\Big|\;v=(v_1,\cdots,v_n)\in\mathbb{Z}^n_{>0},\; \sum_{i=1}^n v_i=n\right\}\nonumber\\
%&=&\left[\sum^n_{i=1}e_i\right]_{\overline{\Omega}}=\left[\sum^n_{i=1}e_i\right]_{{\Omega}}\\
&=&\inf\left\{\sum^n_{i=1}w_i\;|\;
w=(w_1,\cdots,w_n)\in\partial\Omega\cap\mathbb{R}^{2n}_{>0}\right\}\nonumber\\
&=&\max\{\pi r^2\;|\;B^{2n}(r)\subset X_\Omega\}=w_{\rm G}(X_\Omega).
\end{eqnarray}
%where $\{e_i\}^n_{i=1}$ is the standard basis in $\mathbb{R}^n$.
%$$
%\left[\sum^n_{i=1}e_i\right]_{\overline{\Omega}}=\left[\sum^n_{i=1}e_i\right]_{{\Omega}}=\inf\left\{\sum^n_{i=1}w_i\;|\;
%w=(w_1,\cdots,w_n)\in\partial\Omega\cap\mathbb{R}^{2n}_{>0}\right\}.
%$$
For any normalized symplectic capacity $c$ on $\mathbb{R}^{2n}$,
repeating the proof of Theorem~\ref{th:main} we  get
\begin{eqnarray*}
c(X_{\overline{\Omega}})&\le&\min\left\{\|v\|_{\Omega}^*\,\Big|\,v=(v_1,\cdots,v_n)\in\mathbb{Z}^n_{\ge 0},\; \sum_{i=1}^n v_i=1\right\}
\nonumber\\
&=&\min\{\|e_i\|_{\Omega}^\ast\,|\,i=1,\cdots,n\}.
\end{eqnarray*}
Clearly, we have also $c(X_{\overline{\Omega}})\le c(X_{\overline{{\rm conv}(\Omega)}})$
 and
 $$
c(X_{\overline{{\rm conv}(\Omega)}})\le\min\{\|e_i\|_{\overline{{\rm conv}(\Omega)}}^\ast\,|\,i=1,\cdots,n\}= \min\{\|e_i\|_{\Omega}^\ast\,|\,i=1,\cdots,n\}.
 $$
This final equality easily follows from (\ref{e:norm1}).

If $A\subset\mathbb{R}^n$ is a concave balanced region, since
the Lagrangian product $B^n_\infty\times_L A$ is symplectomorphic to $X_{4|A|}$
(\cite[Theorem~7]{RaSe19}), from (\ref{e:concave}) we get
\begin{eqnarray*}
w_{\rm G}(B^n_\infty\times_L A)=c^{\rm CH}_1(B^n_\infty\times_L A)=4
\inf\left\{\sum^n_{i=1}w_i\;|\;
w=(w_1,\cdots,w_n)\in(\partial|A|)\cap\mathbb{R}^{2n}_{>0}\right\}.
\end{eqnarray*}

 }
\end{remark}

%\section{Proof of Theorem~\ref{th:prod}}
%\setcounter{equation}{0}

\begin{proof}[Proof of Corollary~\ref{cor:main1}]
Since we can write
$$
X_{\Omega_1}=\{(z_{n+1},\cdots, z_{m+n})\in \mathbb{C}^m\,|\, (\pi|z_{n+1}|^2,\cdots, \pi|z_{m+n}|^2)\in\Omega_1\},
$$
then
\begin{eqnarray*}
X_{\Omega_1}\times X_{\Omega_2}&=&\{(z_1,\cdots, z_{m+n})\in \mathbb{C}^{n+m}\,|\,
(z_1,\cdots,z_n)\in X_{\Omega_1},\;(z_{n+1},\cdots, z_{m+n})\in X_{\Omega_2}\}\\
&=&X_{\Omega_1\times \Omega_2}
\end{eqnarray*}
and thus
 $c(X_{\Omega_1}\times X_{\Omega_2})=c(X_{\Omega_1\times \Omega_2})$.
By Theorem~\ref{th:main}, we  get
 $$
 c(X_{\Omega_1\times\Omega_2})=\min\{\|e_i\|_{\Omega_1\times\Omega_2}^\ast\,|\,i=1,\cdots, n+m\},
 $$
 where $\{e_i\}_{i=1}^{n+m}$ is the standard orthogonal basis of $\mathbb{R}^{n+m}$.
  But for $i=1,\cdots,n$,
\begin{eqnarray*}
\|e_i\|_{\Omega_1\times\Omega_2}^\ast&=&\sup\{\langle e_i, x\rangle\,|\,x=(x_1,\cdots,x_{n+m})\in\Omega_1\times\Omega_2\}\\
&=&\sup\{x_i\,|\,x=(x_1,\cdots,x_{n+m})\in\Omega_1\times\Omega_2\}\\
&=&\sup\{x_i\,|\,x=(x_1,\cdots,x_n)\in\Omega_1\}\\
&=&\|e_i\|_{\Omega_1}^\ast.
\end{eqnarray*}
Hence we arrive at
$$
\min\{\|e_i\|_{\Omega_1\times\Omega_2}^\ast\,|\,i=1,\cdots,n\}=\min\{\|e_i\|_{\Omega_1}^\ast\,|\, i=1,\cdots,n\}=c(X_{\Omega_1}).
$$
Similarly, we have
%For $i=n+1,\cdots,n+m$, we can get similarly
$\min\{\|e_i\|_{\Omega_1\times\Omega_2}^\ast\,|\, i=n+1,\cdots,n+m\}=c(X_{\Omega_2})$.
Therefore
$$
c(X_{\Omega_1\times\Omega_2})=\min\{c(X_{\Omega_1}),c(X_{\Omega_2})\}.
$$
This and Theorem~\ref{th:main} also lead to the second conclusion.
\end{proof}

\section{Proofs of Theorem~\ref{th:main2} and Corollary~\ref{cor:main3}}\label{sec:Main2}

%From Corollary~\ref{cor:main2} and Theorem~\ref{th:main2}, we can also derive that Conjecture~\ref{conj:viterbo} holds for more general domains.

\begin{proof}[Proof of Theorem~\ref{th:main2}]

We begin with the following lemma.

\begin{lemma}\label{lem:bal}
For a convex balanced region $A\subset\mathbb{R}^n$ and any normalized symplectic capacity $c$ on $\mathbb{R}^{2n}$,
 there holds
 $$
 c(A\times_L A^{\circ})\leqslant 4.
 $$
 \end{lemma}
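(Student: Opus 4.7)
The plan is to reduce $c(A\times_L A^\circ)$ to a capacity of the form $c(B^n_\infty\times_L K)$, which is already evaluated in Corollary~\ref{cor:main2}, by first enclosing $A^\circ$ in a suitable cube and then normalizing the cube factor via a linear symplectic dilation.

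Concretely, I would set $\alpha_i:=\|e_i\|^*_{|A|} = \sup\{x_i\,:\,x\in A\}$ for $1\le i\le n$ and
$$
R:=\frac{1}{\min_{1\le i\le n}\alpha_i}.
$$
Because $A$ is an open bounded balanced region, every $\alpha_i$ is positive and finite, so $R\in(0,\infty)$. The geometric core of the argument is the inclusion $A^\circ\subseteq R B^n_\infty$. I would obtain it from the balanced property of $A$: for each $i$ the point $\alpha_i e_i$ lies in $\overline{A}$ (approximate by $|x_i^{(k)}|e_i\in A$, which is a corner of the box guaranteed by the balanced condition), so any $y\in A^\circ$ satisfies $\alpha_i|y_i|=|\langle y,\alpha_ie_i\rangle|\le 1$, giving $|y_i|\le 1/\alpha_i\le R$.

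Granted the inclusion, the monotonicity axiom yields $c(A\times_L A^\circ)\le c(A\times_L RB^n_\infty)$. The linear map $(x,y)\mapsto(Rx,y/R)$ preserves $\omega_0$ and sends $A\times_L RB^n_\infty$ onto $(RA)\times_L B^n_\infty$, so by symplectic invariance these two capacities agree; composing with the symplectomorphism $(u,v)\mapsto(-v,u)$ identifies the latter with $B^n_\infty\times_L(RA)$. Since $RA$ is again a convex balanced region, Corollary~\ref{cor:main2} applies and gives
$$
c(B^n_\infty\times_L(RA))=4\min_{1\le i\le n}\|e_i\|^*_{|RA|}=4R\min_{1\le i\le n}\alpha_i=4,
$$
which chains to the desired bound $c(A\times_L A^\circ)\le 4$.

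The only step with real content is the inclusion $A^\circ\subseteq RB^n_\infty$, which relies on polar duality together with the availability of axial points $\alpha_i e_i\in\overline{A}$ provided by the balanced hypothesis; once this is in hand, the remainder is routine use of monotonicity, the fact that the chosen dilation has conformal factor $1$, and the explicit capacity formula from Corollary~\ref{cor:main2}.
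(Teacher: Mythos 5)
Your argument is correct, and it is genuinely dual to the one in the paper rather than a reproduction of it. The paper encloses the \emph{first} factor, $A\subset r B^n_\infty$ with $r=\max_i\|e_i\|^*_{|A|}$, and is then forced to apply Corollary~\ref{cor:main2} to $B^n_\infty\times_L(\tfrac1r A^\circ)$; this requires two extra steps that your route avoids entirely: a proof that $A^\circ$ is itself a convex balanced region (so that the corollary is applicable to it), and the estimate $\min_i\|e_i\|^*_{|A^\circ|}\le 1/r$. You instead enclose the \emph{second} factor, $A^\circ\subseteq R\,\overline{B^n_\infty}$ with $R=1/\min_i\|e_i\|^*_{|A|}$, and apply the corollary to $B^n_\infty\times_L(RA)$, where the balancedness hypothesis is free since $RA$ is trivially a convex balanced region; the only duality input you need is the elementary observation that $\alpha_ie_i\in\overline{A}$ forces $|y_i|\le1/\alpha_i$ for $y\in A^\circ$. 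Your version is shorter and, because the dilation $(x,y)\mapsto(Rx,y/R)$ has conformal factor $1$, it also dispenses with the $r^2$ bookkeeping in the paper's display (\ref{e:cap1}). Interestingly, your single duality estimate $|y_i|\le 1/\alpha_i$ is essentially the same inequality the paper proves at the end of its argument ($\|e_j\|^*_{|A^\circ|}\le 1/\|e_j\|^*_{|A|}$), just deployed at the opposite end of the chain. One cosmetic point: since $A^\circ$ is closed, the inclusion you get is into the \emph{closed} cube $R\,\overline{B^n_\infty}$, not the open one; this is harmless --- either invoke the closed-domain version of Corollary~\ref{cor:main2} (available because Theorem~\ref{th:main} treats $X_{\overline{\Omega}}$ as well), or run your chain with $R'>R$ to get $c(A\times_L A^\circ)\le 4R'/R$ and let $R'\downarrow R$.
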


%$|A|:=A\cap\mathbb{R}^n_{\ge 0}$ in $\mathbb{R}^n_{\ge 0}$.
\begin{proof}
Let  $r=\max\{\|e_i\|_{|A|}^\ast\,|\,i=1,\cdots,n\}$.
 By (\ref{e:norm1}) we deduce that $|A|\subset [0, r]^n$.
 This and the definition of the balanced region imply that $ A\subset rB^n_{\infty}$.
 It follows from the monotonicity and conformality of symplectic capacities that
  \begin{equation}\label{e:cap1}
  c(A\times_L A^{\circ})\leqslant c((rB^n_{\infty})\times_L A^{\circ})=r^2c(B^n_{\infty}\times_L (\frac{1}{r}A^{\circ})).
  \end{equation}

  Next, we claim that $A^{\circ}$ is also a convex balanced region.
  It suffices to prove that $A^{\circ}$ is a balanced region.
    In fact,  for any $(y_1,\cdots,y_n)\in A^{\circ}$,
  since $A$ is symmetric with respect to all coordinate hyperplanes, we have
  \begin{equation}\label{e:cap2}
  \{y_1,-y_1\}\times\{y_2,-y_2\}\times\cdots\times\{y_n,-y_n\}\in A^{\circ}.
  \end{equation}
  Moreover, for any $y,y'\in A^{\circ}$, we derive
  $$
  \langle ty+(1-t)y', x\rangle=t\langle y,x\rangle+(1-t)\langle y',x\rangle\leqslant1,\quad\forall x\in A,\;\forall 0<t<1,
  $$
   that is, $A^{\circ}$ is convex set. From this and (\ref{e:cap2}) we derive
    $$
    [-|y_1|,|y_1|]\times[-|y_2|,|y_2|]\times\cdots\times[-|y_n|,|y_n|]\in A^{\circ},
    $$
     namely, $A^\circ$ is a balanced region. %If $y,y'\in |A^{\circ}|\subset A^{\circ}$,  $ty+(1-t)y'\in A^{\circ}$ for any $0<t<1$. But since $y,y'\in \mathbb{R}^n_{\geqslant0}$, $ty+(1-t)y'\in\mathbb{R}^n_{\geqslant0}$. So  $ty+(1-t)y'\in |A^{\circ}|$ for any $0<t<1$. This implies that $|A^{\circ}|$ is convex.

Now from Corollary~\ref{cor:main2} and (\ref{e:cap1}) we deduce
\begin{eqnarray}\label{e:cap3}
 c(A\times_L A^{\circ})&\le& r^2c(B^n_{\infty}\times_L (\frac{1}{r}A^{\circ}))\nonumber\\
&=& 4r \min\{\|e_i\|_{|A^{\circ}|}^\ast|i=1,\cdots,n\}.
\end{eqnarray}
It remains to show that $\min\{\|e_i\|_{|A^{\circ}|}^\ast|i=1,\cdots,n\}\leqslant \frac{1}{r}$.
Let $r=\|e_j\|_{|A|}^\ast$ for some $1\le j\le n$.
Take  $a>0$  such that $ae_j\in |A|$. Then $\langle ae_j, x\rangle\le 1\;\forall x\in A^\circ$.
In particular, $\langle e_j, x\rangle\le\frac{1}{a}\;\forall x\in |A^\circ|$.
This shows $\|e_j\|_{|A^{\circ}|}^\ast\leqslant \frac{1}{a}$. Note that
$\|e_j\|_{|A|}^\ast>0$ and that $a>0$ can be chosen to be arbitrarily close to
$\|e_j\|_{|A|}^\ast$. We get
 $\|e_j\|_{|A^{\circ}|}^\ast\leqslant \frac{1}{\|e_j\|_{|A|}^\ast}=\frac{1}{r}$, and therefore
  $$
  \min\{\|e_i\|_{|A^{\circ}|}^\ast|i=1,\cdots,n\}\le\|e_j\|_{|A^{\circ}|}^\ast\le\frac{1}{r}.
  $$
This and (\ref{e:cap3}) lead to the desired result.
\end{proof}

By Theorem~\ref{th:Ray}, if a centrally symmetric convex domain $A\subset\mathbb{R}^{n}$ is
a balanced region, in particular a Hanner polytope, then  ${\rm Vol}(A\times_L A^{\circ})\geqslant\frac{4^n}{n!}$ and
therefore $A\times_L A^{\circ}$ satisfies Conjecture~\ref{conj:viterbo}, i.e.,
\begin{eqnarray}\label{e:cap4}
c(A\times_L A^{\circ})\leqslant 4\leqslant (n!{\rm Vol}(A\times_L A^{\circ}))^{\frac{1}{n}}
\end{eqnarray}
 for any normalized symplectic capacity $c$ on $\mathbb{R}^{2n}$.

Now assume  that  $A\subset\mathbb{R}^{n}$ is $1$-unconditional convex domain with basis $\{\eta_1,\cdots,\eta_n\}$.
Let $\{e_1,\cdots,e_n\}$ be the standard basis of $\mathbb{R}^{n}$, and let $\Upsilon\in{\rm GL}(n,\mathbb{R})$
map $\eta_i$ to $e_i$ for $i=1,\cdots,n$.
Since $\|x\|_{\Upsilon(A)}=\|\Upsilon^{-1}x\|_A$ for any $x\in\mathbb{R}^n$, a straightforward computation
shows that  $\Upsilon(A)\subset\mathbb{R}^{n}$ is $1$-unconditional convex domain with basis $\{e_1,\cdots, e_n\}$.
It follows that
$$
\|(x_1,\cdots,x_n)\|_{\Upsilon(A)}=\|(|x_1|,\cdots, |x_n|)\|_{\Upsilon(A)},\quad \forall x\in\mathbb{R}^n,
$$
which means that the convex domain $\Upsilon(A)\subset\mathbb{R}^{n}$ is
a balanced region. By (\ref{e:cap4}) we get
\begin{eqnarray}\label{e:cap5}
c(\Upsilon(A)\times_L (\Upsilon(A))^{\circ})\leqslant 4\leqslant (n!{\rm Vol}(\Upsilon(A)\times_L (\Upsilon(A))^{\circ}))^{\frac{1}{n}}
\end{eqnarray}
 for any normalized symplectic capacity $c$ on $\mathbb{R}^{2n}$.
Denote by $\Upsilon^T$  the transpose of $\Upsilon\in{\rm GL}(n,\mathbb{R})$ with respect to
the  inner product $\langle\cdot, \cdot\rangle$ in $\mathbb{R}^{n}$. Then
\begin{eqnarray}\label{e:cap6}
\Phi_\Upsilon:(\mathbb{R}^{2n},\omega_0)\to (\mathbb{R}^{2n},\omega_0),\;(x,y)\mapsto (\Upsilon x, (\Upsilon^T)^{-1}y)
\end{eqnarray}
is a symplectomorphism.
 By the definition of the polar it is easy to check that
\begin{eqnarray*}
(\Upsilon(A))^{\circ}=\{x\in\mathbb{R}^{n}\,|\, \langle y, x\rangle\le 1\;\forall y\in \Upsilon(A)\}
=\{(\Upsilon^T)^{-1}u\,|\, u\in A^\circ\}=(\Upsilon^T)^{-1}(A)^\circ.
\end{eqnarray*}
Then  $\Upsilon(A)\times (\Upsilon(A))^{\circ}=\Phi_\Upsilon(A\times A^\circ)$,
${\rm Vol}((\Upsilon(A))^{\circ})=|\det (\Upsilon^T)^{-1}|{\rm Vol}(A^\circ)$ and so
$$
{\rm Vol}(\Upsilon(A)\times (\Upsilon(A))^{\circ})={\rm Vol}(\Upsilon(A)){\rm Vol}((\Upsilon(A))^{\circ})
={\rm Vol}(A){\rm Vol}(A^\circ)={\rm Vol}(A\times_L A^\circ).
$$
%(i.e., the Mahler volume is affine invariant).
From these and (\ref{e:cap5}) we derive (\ref{e:zon0}).
%$$
%c(A\times_L A^{\circ})\leqslant 4\leqslant (n!{\rm Vol}(A\times_L A^{\circ}))^{\frac{1}{n}}
%$$
% for any normalized symplectic capacity $c$ on $\mathbb{R}^{2n}$.
Theorem~\ref{th:main2} is proved.
%As the arguments below (\ref{e:cap5}) we have
%$$
%\Upsilon(A)\times (\Upsilon(A))^{\circ}=\Phi_\Upsilon(A\times A^\circ)\quad\hbox{and}\quad
%{\rm Vol}(\Upsilon(A)\times (\Upsilon(A))^{\circ})={\rm Vol}(A\times_L A^\circ).
%$$
% From these and (\ref{e:cap4}) we derive (\ref{e:zon0}).
%%$$
%%c(A\times_L A^{\circ})\leqslant 4\leqslant (n!{\rm Vol}(A\times_L A^{\circ}))^{\frac{1}{n}}
%%$$
%% for any normalized symplectic capacity $c$ on $\mathbb{R}^{2n}$.
%Theorem~\ref{th:main2} is proved.
\end{proof}
%\hfill$\Box$\vspace{2mm}

\begin{proof}[Proof of Corollary~\ref{cor:main3}]
Since every closed $l_p$-ball $\overline{B^n_p}$
is a $1$-unconditional convex domain with basis $\{e_i\}^n_{i=1}$ in $\mathbb{R}^n$,
 for any normalized symplectic capacity $c$ on $\mathbb{R}^{2n}$ we derive from  (\ref{e:zon0})  that
\begin{equation}\label{e:zon2}
c(B^n_p\times_L (B^n_p)^{\circ})\leqslant 4\leqslant (n!{\rm Vol}(B^n_p\times_L (B^n_p)^{\circ}))^{\frac{1}{n}}
\end{equation}
and therefore 
\begin{equation}\label{e:zon3}
c(A\times_L A^{\circ})\leqslant 4\leqslant (n!{\rm Vol}(A\times_L A^{\circ}))^{\frac{1}{n}}.
\end{equation}
If $p=1$ or $\infty$, Corollary~\ref{cor:main2} has yielded $c(B^n_p\times_L (B^n_p)^{\circ})=4$.
For $1<p<\infty$, we have $w_G(B^n_p\times_L (B^n_p)^{\circ})\ge 4$ by \cite[Proposition~3.1]{KaRo19}.
As above these give rise to 
$$
c(A\times_L A^{\circ})\ge w_G(A\times_L A^{\circ})=w_G(B^n_p\times_L (B^n_p)^{\circ})\ge 4
\quad\forall p\in [1, \infty].
$$ 
This and the first inequality in (\ref{e:zon3}) lead to
 equality in (\ref{e:zon1}).
\end{proof}

%
%inequality in (\ref{e:zon1}). In Section~\ref{sec:main3} we shall give a direct calculus proof of
%the inequality in (\ref{e:zon2}).

\section{A direct proof of the Mahler conjecture for $l_p$-balls}\label{sec:main3}

% Because the Mahler volume is an affine invariant, a slight general
%case can be obtained.

%The following theorem affirmatively answers the Mahler conjecture for $l_p$-ellipsoids.
In this section we shall prove the following.

\begin{theorem}\label{th:3main}
Let $Q=\Upsilon(B^n_p)\subset\mathbb{R}^n$ be a $l_p$-ellipsoid with $\Upsilon\in{\rm GL}(n,\mathbb{R})$. If $n=1$ then
${\rm Vol}(Q\times Q^\circ)={\rm Vol}(Q){\rm Vol}(Q^\circ)\equiv 4$
for all $p\in[1,\infty]$. If $n\ge 2$ then there holds
\begin{eqnarray}\label{e:viterbo2}
{\rm Vol}(Q\times Q^\circ)={\rm Vol}(Q){\rm Vol}(Q^\circ)\ge \frac{4^n}{n!}
\end{eqnarray}
for all $p\in[1,\infty]$, and the equality  holds if and only if $p=1$ or $p=\infty$.
\end{theorem}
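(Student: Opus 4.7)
The plan is to reduce, by affine invariance of the Mahler volume, to the case $Q = B^n_p$. Indeed, if $\Upsilon \in \mathrm{GL}(n, \mathbb{R})$ then $(\Upsilon K)^\circ = (\Upsilon^T)^{-1} K^\circ$, so the factors $|\det\Upsilon|$ and $|\det(\Upsilon^T)^{-1}|$ cancel in the volume product, giving $\mathrm{Vol}(Q)\mathrm{Vol}(Q^\circ) = \mathrm{Vol}(B^n_p)\mathrm{Vol}(B^n_q)$ with $1/p + 1/q = 1$. The $n=1$ case is then trivial since $B^1_p = [-1, 1]$ for every $p \in [1, \infty]$. For $n \ge 2$, the classical Dirichlet integral computation gives
\[
\mathrm{Vol}(B^n_p) = \frac{(2\Gamma(1+1/p))^n}{\Gamma(1+n/p)},
\]
and (\ref{e:viterbo2}) rearranges to the one-variable gamma inequality
\[
n!\,\bigl(\Gamma(1+1/p)\Gamma(1+1/q)\bigr)^n \;\ge\; \Gamma(1+n/p)\Gamma(1+n/q),
\]
which must be strict for $p \in (1,\infty)$.

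Setting $t = 1/p$ and $H(t) := \log\Gamma(1+nt) - n\log\Gamma(1+t)$, this rearranges again to $\Phi(t) := H(t) + H(1-t) \le \log(n!)$ on $[0,1]$. The boundary values $H(0) = 0$ and $H(1) = \log(n!)$ force $\Phi(0) = \Phi(1) = \log(n!)$, so the bound is already tight at $t \in \{0,1\}$. The strategy is to prove that $H$ is strictly convex on $[0,1]$ for $n \ge 2$: if so, $\Phi$, being the sum of $H$ and its reflection $H(1-\cdot)$, is strictly convex and symmetric about $t=1/2$, hence lies strictly below its chord (the horizontal line $\log(n!)$) on the open interval, with equality only at the endpoints. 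Translated back through the chain of reductions, this yields (\ref{e:viterbo2}) with equality exactly at $p \in \{1, \infty\}$, matching the assertion of the theorem.

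The main obstacle is therefore the strict convexity of $H$, i.e., the inequality
\[
H''(t) \;=\; n\bigl[n\psi'(1+nt) - \psi'(1+t)\bigr] \;>\; 0 \quad (t \in [0,1],\; n \ge 2),
\]
where $\psi$ is the digamma function. I would use the integral representation $\psi'(x) = \int_0^\infty \tau e^{-x\tau}/(1 - e^{-\tau})\,d\tau$, substitute $u = n\tau$ in the first summand, and invoke the factorization $e^u - 1 = (e^{u/n}-1)\sum_{k=0}^{n-1}e^{ku/n}$ to combine the two contributions into
\[
n\psi'(1+nt) - \psi'(1+t) \;=\; \int_0^\infty u\,e^{-tu}\,\frac{\sum_{k=0}^{n-1}(e^{ku/n}-1)}{n(e^{u/n}-1)\sum_{k=0}^{n-1}e^{ku/n}}\,du,
\]
whose integrand is manifestly non-negative and strictly positive for $u > 0$ and $n \ge 2$. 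This gives the strict convexity of $H$, and the chord argument outlined above then closes out the proof, including the equality characterization.
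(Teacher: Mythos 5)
Your proposal is correct, and it takes a genuinely different route from the paper. The paper also reduces to $Q=B^n_p$ by affine invariance and uses the same volume formula, but then exploits the symmetry $p\leftrightarrow q=p/(p-1)$ to restrict to $p\in[1,2]$ and proves that $p\mapsto{\rm Vol}(B^n_p){\rm Vol}((B^n_p)^\circ)$ is \emph{strictly increasing} there: it computes the logarithmic derivative as $\frac{n}{p^2}\Phi_n(p)$ with $\Phi_n(p)=\psi(2-\frac1p)-\psi(1+\frac1p)+\psi(1+\frac np)-\psi(n+1-\frac np)$, and shows $\Phi_n(p)>0$ for $1\le p<2$ via the Dirichlet integral for $\psi$ and a monotonicity lemma for the auxiliary kernel $f(y)=\frac{1-y}{1-y^{1/n}}\frac1n y^{\frac1n-1}$. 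You instead work with $t=1/p$ on all of $[0,1]$, write the target as $\Phi(t)=H(t)+H(1-t)\le\log(n!)$ with $H(t)=\log\Gamma(1+nt)-n\log\Gamma(1+t)$, and derive everything from strict convexity of $H$, i.e.\ positivity of $n\psi'(1+nt)-\psi'(1+t)$, which your trigamma integral representation and the factorization $e^u-1=(e^{u/n}-1)\sum_{k=0}^{n-1}e^{ku/n}$ do establish (the numerator $\sum_{k=1}^{n-1}(e^{ku/n}-1)$ is strictly positive for $u>0$, $n\ge2$). The chord argument then handles the inequality and the equality characterization in one stroke, since $\Phi(0)=\Phi(1)=\log(n!)$; note the symmetry of $\Phi$ about $t=1/2$ is not even needed. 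Your approach buys a cleaner, global treatment of $p\in[1,\infty]$ with the equality cases falling out as the chord endpoints; the paper's buys slightly more information (strict monotonicity of the volume product on each half of the parameter range). Both hinge on a positivity statement for a polygamma difference proved by an integral representation, so the technical cores are of comparable difficulty.
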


As the arguments below (\ref{e:cap6})
%we have $\Upsilon(B^n_p)\times (\Upsilon(B^n_p))^{\circ}=\Phi_\Upsilon(B^n_p\times (B^n_p)^\circ)$,
%and ${\rm Vol}(Q\times Q^\circ)={\rm Vol}(B^n_p\times (B^n_p)^\circ)$.
%Therefore
we only need to prove the case $\Upsilon=id_{\mathbb{R}^n}$, that is:

\begin{claim}\label{cl:Ma1}
For $n=1$, ${\rm Vol}(B^n_p\times (B^n_p)^\circ)={\rm Vol}(B^n_1\times (B^n_1)^\circ)=4\;\forall p\in[1,\infty]$.
If $n\ge 2$ then
\begin{eqnarray}\label{e:Ma0}
{\rm Vol}(B^n_p\times (B^n_p)^\circ)={\rm Vol}(B^n_p){\rm Vol}((B^n_p)^\circ)\ge 4^n/n!,\quad\forall p\in[1,\infty],
\end{eqnarray}
and the equality in (\ref{e:Ma0}) holds if and only if $p=1$ or $p=\infty$.
\end{claim}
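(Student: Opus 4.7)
My plan is to reduce (\ref{e:Ma0}) to an explicit $\Gamma$-function inequality and then establish it by strict log-concavity of a one-parameter family. Since $(B^n_p)^\circ = B^n_q$ with $1/p+1/q=1$ (the classical $l_p$/$l_q$ duality), the Dirichlet formula ${\rm Vol}(B^n_p) = (2\Gamma(1+1/p))^n/\Gamma(1+n/p)$ yields
$${\rm Vol}(B^n_p)\,{\rm Vol}((B^n_p)^\circ) \;=\; \frac{4^n\,\Gamma(1+1/p)^n\,\Gamma(1+1/q)^n}{\Gamma(1+n/p)\,\Gamma(1+n/q)}.$$
For $n=1$ this simplifies to $4$ for every $p\in[1,\infty]$. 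For $n\ge 2$, writing $s=1/p\in[0,1]$ (so $1/q=1-s$), the desired (\ref{e:Ma0}) becomes $\Phi_n(s)\ge 1$, where
$$\Phi_n(s) \;:=\; \frac{n!\,\Gamma(1+s)^n\,\Gamma(2-s)^n}{\Gamma(1+ns)\,\Gamma(1+n-ns)}.$$
Direct evaluation with $\Gamma(k+1)=k!$ gives $\Phi_n(0)=\Phi_n(1)=1$, and the substitution $s\leftrightarrow 1-s$ shows $\Phi_n$ is symmetric about $s=1/2$.

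Setting $\psi_n := \log \Phi_n$, the heart of the plan is to prove that $\psi_n$ is \emph{strictly concave} on $[0,1]$ for every $n\ge 2$; together with $\psi_n(0)=\psi_n(1)=0$ this forces $\psi_n(s)>0$ on $(0,1)$, which is exactly (\ref{e:Ma0}) with the equality characterisation $s\in\{0,1\}$, i.e., $p\in\{1,\infty\}$. Differentiating twice with the trigamma $\Psi'=(\log\Gamma)''$ gives
$$\psi_n''(s) \;=\; n\bigl[\Psi'(1+s)+\Psi'(2-s)\bigr] \;-\; n^2\bigl[\Psi'(1+ns)+\Psi'(1+n-ns)\bigr].$$
The key identity arises by writing $j=nm+r$ with $m\ge 0$, $r\in\{1,\dots,n\}$ in the Weierstrass series $\Psi'(x)=\sum_{k\ge 0}(x+k)^{-2}$:
$$n\,\Psi'(1+ns) \;=\; n\sum_{j\ge 1}\frac{1}{(j+ns)^2} \;=\; \frac{1}{n}\sum_{r=1}^n \Psi'\!\left(\frac{r}{n}+s\right),$$
and analogously with $s\mapsto 1-s$. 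Substituting into $\psi_n''(s)$ and peeling off the $r=n$ terms (which reproduce exactly $\Psi'(1+s)+\Psi'(2-s)$) reduces the inequality $\psi_n''(s)\le 0$ to
$$(n-1)\bigl[\Psi'(1+s)+\Psi'(2-s)\bigr] \;\le\; \sum_{r=1}^{n-1}\!\Bigl[\Psi'\!\left(\frac{r}{n}+s\right) + \Psi'\!\left(\frac{r}{n}+1-s\right)\Bigr].$$
Since $\Psi''(x)=-2\sum_{k\ge 0}(x+k)^{-3}<0$ on $(0,\infty)$, $\Psi'$ is strictly decreasing there; combined with $r/n+s<1+s$ and $r/n+1-s<2-s$ for every $r\in\{1,\dots,n-1\}$, the right-hand side strictly exceeds the left term-by-term, so $\psi_n''(s)<0$ throughout $[0,1]$.

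The step I expect to be the main obstacle is finding the right comparison: a naive term-by-term bound between the Weierstrass series for $\Psi'(1+s)$ and $\Psi'(1+ns)$ fails because their tails decay at very different rates, and the ``$n^2$ versus $n$'' prefactor discrepancy must be absorbed somewhere. The residue-class decomposition $j=nm+r$ is what resolves this cleanly: it rewrites $n\,\Psi'(1+ns)$ as the arithmetic average of the $n$ values $\Psi'(r/n+s)$ whose arguments all sit \emph{below} $1+s$, and from there strict monotonicity of $\Psi'$ finishes everything. Once $\psi_n''<0$ is in hand, the passage from $\psi_n(0)=\psi_n(1)=0$ to $\Phi_n(s)>1$ on $(0,1)$ (with equality only at $s\in\{0,1\}$, i.e., $p\in\{1,\infty\}$) is immediate from strict concavity.
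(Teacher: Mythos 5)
Your proposal is correct, and it takes a genuinely different route from the paper. The paper restricts to $p\in[1,2]$ by the $p\leftrightarrow q$ duality, differentiates the volume product \emph{once} in $p$, and shows the logarithmic derivative equals $\frac{n}{p^2}\bigl[\psi(2-\tfrac1p)-\psi(1+\tfrac1p)+\psi(1+\tfrac np)-\psi(n+1-\tfrac np)\bigr]$ with $\psi$ the digamma function; positivity of the bracket is then extracted from Dirichlet's integral formula together with a somewhat fiddly analysis of the auxiliary function $f(y)=a\frac{1-y}{1-y^a}y^{a-1}$ (showing $f'<0$ and $f>1$ on $(0,1)$). You instead pass to $s=1/p$ on the full interval $[0,1]$, take \emph{two} derivatives of $\log\Phi_n$, and reduce strict concavity to the trigamma multiplication formula $\Psi'(1+ns)=\frac{1}{n^2}\sum_{r=1}^{n}\Psi'(\tfrac rn+s)$ — your residue-class decomposition $j=nm+r$ is exactly a proof of that formula — after which strict monotonicity of $\Psi'$ finishes term by term. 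Your computation of $\psi_n''$, the peeling of the $r=n$ terms, and the boundary values $\Phi_n(0)=\Phi_n(1)=1$ all check out, and strict concavity plus vanishing at the endpoints immediately yields both the inequality and the equality characterisation $p\in\{1,\infty\}$. What each approach buys: the paper's monotonicity argument uses only the Dirichlet representation of the digamma and stays entirely elementary, at the cost of invoking the duality symmetry to halve the interval and of the ad hoc function $f$; your log-concavity argument is structurally cleaner, treats the whole interval at once, and delivers the strict inequality and equality cases in one stroke, at the cost of needing the (standard but less elementary) multiplication theorem for $\Psi'$.
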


This is a special example of Theorem~\ref{th:Ray} because $B^n_p$ is
a centrally symmetric convex domain $\mathbb{R}^{n}$ with $1$-unconditional
basis $\{e_i\}^n_{i=1}$. However, we here give a simple calculus  proof of it.

Since $(B^n_p)^\circ=B^n_q$ with $q=p/(p-1)$, and $[1,2]\ni p\mapsto q=p/(p-1)\in [2,\infty]$
is a homeomorphism, by symmetry it suffices to prove Claim~\ref{cl:Ma1} for $p\in [1, 2]$.

By \cite[(1.17)]{Pi89},  we have
\begin{eqnarray}\label{e:ball}
{\rm Vol}(B^n_p)=\left(2\Gamma\left(1+\frac{1}{p}\right)\right)^n\left(\Gamma\left(1+\frac{n}{p}\right)\right)^{-1}
\end{eqnarray}
and so
\begin{eqnarray*}
{\rm Vol}((B^n_p)^\circ)&=&\left(2\Gamma\left(1+\frac{1}{p/(p-1)}\right)\right)^n\left(\Gamma\left(1+\frac{n}{p/(p-1)}\right)\right)^{-1}\\
&=&\left(2\Gamma\left(2-\frac{1}{p}\right)\right)^n\left(\Gamma\left(n+1-\frac{n}{p}\right)\right)^{-1}
\end{eqnarray*}
and
\begin{eqnarray*}
{\rm Vol}(B^n_p){\rm Vol}((B^n_p)^\circ)
&=&\frac{4^n\left(\Gamma\left(1+\frac{1}{p}\right)\right)^n
\left(\Gamma\left(2-\frac{1}{p}\right)\right)^n}
{\Gamma\left(1+\frac{n}{p}\right)\Gamma\left(n+1-\frac{n}{p}\right)}.
\end{eqnarray*}
Taking the derivative of the function  $[1,2]\ni p\mapsto {\rm Vol}(B^n_p){\rm Vol}((B^n_p)^\circ)$ we get
\begin{eqnarray*}
&&\frac{d}{dp}{\rm Vol}(B^n_p){\rm Vol}((B^n_p)^\circ)\\
&=&4^n\frac{n}{p^2}\Gamma(1+\frac{1}{p})^{n-1}\Gamma(2-\frac{1}{p})^{n-1}
\frac{[\Gamma(1+\frac{1}{p})\Gamma'(2-\frac{1}{p})-\Gamma'(1+\frac{1}{p})\Gamma(2-\frac{1}{p})]}
{\Gamma(1+\frac{n}{p})\Gamma(n+1-\frac{n}{p})}  \\
&&+4^n\frac{n}{p^2}\Gamma(1+\frac{1}{p})^n\Gamma(2-\frac{1}{p})^n\frac{[\Gamma'(1+\frac{n}{p})\Gamma(n+1-\frac{n}{p})-
\Gamma'(n+1-\frac{n}{p})\Gamma(1+\frac{n}{p})]}{\Gamma(1+\frac{n}{p})^2\Gamma(n+1-\frac{n}{p})^2}.
\end{eqnarray*}
Recall that  the formula $\Gamma'(x)=\Gamma(x)\psi(x)\;\forall x>0$, where $\psi$-function is defined by
\begin{eqnarray*}
\psi(x)&=&\lim_{n\to\infty}\left\{\ln n-\sum^n_{k=0}\frac{1}{x+k}\right\}\\
&=&\int^\infty_0[e^{-t}-(1+t)^{-x}]t^{-1}dt\quad\hbox{(Gauss intergral formula)}\\
&=&-\gamma+ \int_0^1\frac{1-t^{x-1}}{1-t}dt\quad\hbox{(Dirichlet formula)}\\
\end{eqnarray*}
 where $\gamma$ is Euler constant.
We can  immediately deduce
\begin{eqnarray*}
&&\frac{d}{dp}{\rm Vol}(B^n_p){\rm Vol}((B^n_p)^\circ)\\
&=&4^n\frac{n}{p^2}\Gamma(1+\frac{1}{p})^n\Gamma(2-\frac{1}{p})^n\frac{\psi(2-\frac{1}{p})-\psi(1+\frac{1}{p})+\psi(1+\frac{n}{p})-\psi(n+1-\frac{n}{p})}{\Gamma(1+\frac{n}{p})\Gamma(n+1-\frac{n}{p})}\\
&=&\frac{n}{p^2}\left[\psi(2-\frac{1}{p})-\psi(1+\frac{1}{p})+\psi(1+\frac{n}{p})-\psi(n+1-\frac{n}{p})\right]{\rm Vol}(B^n_p){\rm Vol}((B^n_p)^\circ).
\end{eqnarray*}
Denote by $\Phi_n(p)$ the function in the square brackets. Then $\Phi_1(p)\equiv 0$ and so
the first conclusion in Claim~\ref{cl:Ma1} holds true.

\begin{claim}\label{cl:Ma2}
When $n\ge 2$, $\Phi_n(2)=0$ and $\Phi_n(p)>0$ for any $1\le p<2$.
\end{claim}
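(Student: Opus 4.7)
\textbf{Proof plan for Claim~\ref{cl:Ma2}.} My approach is to rewrite $\Phi_n$ as the increment of a single digamma-difference function and then reduce monotonicity of that function to an elementary AM--GM inequality. Substituting $a = 1/p$, note that $2-a = (1-a)+1$, $1+a = a+1$, $1+na = na+1$, and $n+1-na = n(1-a)+1$, so
\[
\Phi_n(p) \;=\; \bigl[\psi(na+1)-\psi(a+1)\bigr] \;-\; \bigl[\psi(n(1-a)+1) - \psi((1-a)+1)\bigr] \;=\; F(a) - F(1-a),
\]
where
\[
F(x) \;:=\; \psi(nx+1) - \psi(x+1), \qquad x > 0.
\]
For $p = 2$ we have $a = 1-a = 1/2$, giving $\Phi_n(2) = 0$; and for $p \in [1,2)$ we have $a = 1/p \in (1/2,1]$, so $a > 1-a > 0$. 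Thus the whole claim reduces to showing $F$ is strictly increasing on $(0,\infty)$.

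To prove $F'(x) = n\psi'(nx+1) - \psi'(x+1) > 0$, I will use the integral representation $\psi'(y) = \int_0^\infty \frac{s\, e^{-ys}}{1-e^{-s}}\,ds$ (obtained from the Dirichlet formula in the excerpt by substituting $t = e^{-s}$). After the rescaling $s \mapsto s/n$ in the first integral, the derivative combines into
\[
F'(x) \;=\; \int_0^\infty s\, e^{-xs}\left[\frac{1}{n(e^{s/n}-1)} - \frac{1}{e^{s}-1}\right] ds,
\]
so positivity reduces to the pointwise inequality $n(e^{s/n}-1) < e^{s}-1$ for every $s>0$. Writing $u = s/n$, this becomes $e^{nu} + (n-1) > n e^{u}$ for $u > 0$, which follows immediately from AM--GM applied to the $n$ numbers $e^{nu},1,1,\dots,1$, with strict inequality whenever $u > 0$.

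Combining, $F'(x) > 0$ for all $x > 0$, hence $F$ is strictly increasing, hence $\Phi_n(p) = F(a) - F(1-a) > 0$ for all $a \in (1/2,1]$, i.e., for all $p \in [1,2)$, while $\Phi_n(2) = 0$.

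\textbf{Main obstacle.} The conceptual step is recognizing that the four-term digamma expression collapses into the increment $F(a) - F(1-a)$ of one function; once that is seen, strict monotonicity of $F$ is the only analytic input, and the AM--GM argument handles it cleanly. A minor technical point is verifying the derivative can be passed under the integral sign in $\psi'$; this is routine since the integrand $s\,e^{-ys}/(1-e^{-s})$ is dominated on compact $y$-intervals in $(0,\infty)$.
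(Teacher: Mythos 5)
Your proof is correct, and it takes a genuinely different route from the paper's. The paper also starts from the Dirichlet integral for $\psi$, but it keeps all four digamma terms as two differences, writes each as an integral over $[0,1]$, and then substitutes $x^n=y$ in the second integral so that both share the kernel $\frac{y^{1-1/p}-y^{1/p}}{1-y}$; the second acquires a Jacobian factor $f(y)=\frac{1}{n}\,\frac{1-y}{1-y^{1/n}}\,y^{\frac1n-1}$, and the whole claim reduces to showing $f(y)>1$ on $(0,1)$, which the paper does by checking $f'<0$ and $\lim_{y\to1}f(y)=1$. The case $p=1$ is handled separately there via $\psi(x+1)=\psi(x)+\tfrac1x$, giving $\Phi_n(1)=\sum_{k=2}^n\tfrac1k$. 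Your reorganization $\Phi_n(p)=F(a)-F(1-a)$ with $F(x)=\psi(nx+1)-\psi(x+1)$ isolates the reflection symmetry about $a=\tfrac12$ that makes $\Phi_n(2)=0$ transparent, and reduces everything to strict monotonicity of a single function, proved cleanly via the trigamma representation and AM--GM; it also treats $p\in[1,2)$ uniformly (for $p=1$ you compare $F(1)$ with $F(0)$, so you should note that $F$ extends continuously to $x=0$ with $F(0)=0$, whence strict increase on $[0,\infty)$ follows from $F'>0$ on $(0,\infty)$ --- a trivial point, but worth a half-sentence). What the paper's route buys is that it stays entirely at the level of $\psi$ and one-dimensional calculus on $[0,1]$ without invoking $\psi'$; what yours buys is a more structural explanation of why $p=2$ is the critical point and a single monotonicity lemma in place of the sign analysis of a combined integrand.
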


We first admit this. Then the function  $[1,2]\ni p\mapsto {\rm Vol}(B^n_p){\rm Vol}((B^n_p)^\circ)$
is strictly monotonously increasing for each integer $n\ge 2$.
Moreover, ${\rm Vol}(B^n_1)=2^n/n!$ and ${\rm Vol}(B^n_\infty)=2^n$.
Claim~\ref{cl:Ma2} immediately leads to the second conclusion in Claim~\ref{cl:Ma1}.

\begin{proof}[Proof of Claim~\ref{cl:Ma2}]
Since  $\psi(x+1)=\psi(x)+\frac{1}{x}$, then $\Phi_n(2)=0$ and
$\Phi_n(1)=\sum_{k=2}^{n}\frac{1}{k}$.
We always assume $1<p<2$ below.
By the Dirichlet formula above we get
$$
\psi(s+1)=-\gamma+\int_0^1\frac{1-x^s}{1-x}dx.
$$
It follows that
\begin{eqnarray}\label{e:Ma1}
\psi(2-\frac{1}{p})-\psi(1+\frac{1}{p})&=&\int_0^1\frac{x^{1/p}-x^{1-1/p}}{1-x}dx,\\
\psi(1+\frac{n}{p})-\psi(n+1-\frac{n}{p})&=&\int_0^1\frac{x^{n-n/p}-x^{n/p}}{1-x}dx\nonumber\\
%&=&\int_0^1\frac{y^{1-1/p}-y^{1/p}}{1-y^{1/n}}\frac{1}{n}y^{\frac{1}{n}-1}dy\nonumber\\
&=&\int_0^1\frac{y^{1-1/p}-y^{1/p}}{1-y}\frac{1-y}{1-y^{1/n}}\frac{1}{n}y^{\frac{1}{n}-1}dy\label{e:Ma2}
\end{eqnarray}
by setting $x^n=y$. For convenience let $a=1/n$ and
$$
f(y):=\frac{1-y}{1-y^{1/n}}\frac{1}{n}y^{\frac{1}{n}-1}=a\frac{1-y}{1-y^{a}}y^{a-1}.
$$
 A straightforward computation leads to
\begin{eqnarray*}
f'(y)&=&a\left(\frac{1-y}{1-y^{a}}\right)'y^{a-1}+ a\frac{1-y}{1-y^{a}}(a-1)y^{a-2}\\
&=&ay^{a-1}\frac{-(1-y^a)-(1-y)(-ay^{a-1})}{(1-y^{a})^2}+a\frac{1-y}{1-y^{a}}(a-1)y^{a-2}\\
&=&\frac{ay^{a-1}}{(1-y^{a})^2}\left(-(1-y^{a})+ay^{a-1}(1-y)+(1-y)(a-1)y^{-1}(1-y^{a})\right)\\
&=&\frac{ay^{a-1}}{(1-y^{a})^2}\left((a-1)(y^{-1}-1)+y^{a-1}-1\right)\\
&=&\frac{ay^{a-1}}{(1-y^{a})^2}\left(\frac{1}{y}(a- ay-1+y^{a})\right).
\end{eqnarray*}
Let $g(y)=a- ay-1+y^{a}$. Then $g(0)=a-1<0$, $g(1)=0$ and $g'(y)=-a+ ay^{a-1}>0$ for all $0<y<1$.
It follows that $g(y)<0$ and so $f'(y)<0$ for all $0<y<1$.

On the other hand, by L'Hospital rule, we get $\lim_{y\to 1}f(y)=1$.
Hence $f(y)>1$ for $0<y<1$.  Using (\ref{e:Ma1}) and (\ref{e:Ma2}) we deduce that
 \begin{eqnarray*}
\Phi_n(p)&=&\psi(2-\frac{1}{p})-\psi(1+\frac{1}{p})+ \psi(1+\frac{n}{p})-\psi(n+1-\frac{n}{p})\\
&=&\int_0^1\frac{y^{1/p}-y^{1-1/p}}{1-y}dy+
\int_0^1\frac{y^{1-1/p}-y^{1/p}}{1-y}f(y)dy\\
&=&\int_0^1\frac{y^{1-1/p}-y^{1/p}}{1-y}(f(y)-1)dy>0
\end{eqnarray*}
since $y^{1-1/p}-y^{1/p}=y^{1/p}(y^{1-2/p}-1)<0$ for $0<y<1$ and $1<p<2$.
Claim~\ref{cl:Ma2} is proved.
\end{proof}

\section{Concluding remarks}\label{sec:remark}
\setcounter{equation}{0}

\begin{remark}\label{rm:2}
{\rm For $1\leqslant p<\infty$, $\mathbb{X}_p=\{(x,y)\in\mathbb{R}^2\times\mathbb{R}^2\,|\,\|x\|^p+\|y\|^p \leqslant1\}$
is called the $l_p$-sum of two Langrangian  open unit discs $B^2_2$, where  $\|\cdot\|$ denotes the standard Euclidean norm on $\mathbb{R}^2$. If $p=\infty$, $\mathbb{X}_{\infty}=\{(x,y)\in\mathbb{R}^2\times\mathbb{R}^2\,|\,\max\{\|x\|, \|y\|\}<1\}$
 is exactly the Lagrangian product $B^2_2\times_L B^2_2$.
 For any normalized symplectic capacity $c$ on $\mathbb{R}^4$ and $p\in[1,\infty]$
 it easily follows from  \cite{OsRa19, Ra15, GuH18, GuRa20} that
\begin{equation}\label{e:4.1}
c(\mathbb{X}_p)=\left\{
\begin{aligned}
&2\pi(1/4)^{1/p}, & p\in[1,2],\\
&\frac{4\Gamma(1+ \frac{1}{p})^2}{\Gamma(1+\frac{2}{p})}, & p\in[2,\infty),\\
&4,  & p=\infty.\\
\end{aligned}
\right.
\end{equation}
In particular, $\mathbb{X}_p$ satisfies Conjecture~\ref{conj:Sviterbo}.

In fact, for $p\in[1,\infty)$, by \cite[Theorem~5]{OsRa19}
  $\mathbb{X}_p$ is symplectomorphic  to $X_{\Omega_p}$, where $\Omega_p$ is the relatively
open set in $\mathbb{R}_{\geqslant0}^2$ bounded by the coordinate axes and the curve $\gamma_p$ parametrized by
$$(2\pi v+g_p(v), g_p(v)), \text{ for } v\in[0,(1/4)^{1/p}], $$
$$(g_p(-v), -2\pi v+ g_p(-v)), \text{ for } v\in[-(1/4)^{1/p},0],$$
where $g_p:[0,(1/4)^{1/p}]\rightarrow\mathbb{R}$ is the function defined by
 $$
 g_p(v):=2\int_{(\frac{1}{2}+\sqrt{\frac{1}{4}-v^p})^{1/p}}^{\frac{1}{2}-\sqrt{\frac{1}{4}-v^p}^{1/p}}\sqrt{(1-r^{p})^{2/p}-\frac{v^2}{r^2}}dr.
 $$
For $p=\infty$, Theorem~3 in \cite{Ra15} (with the notations in \cite[Theorem~6]{OsRa19})
claimed that
 $\mathbb{X}_{\infty}=B^2_2\times_L B^2_2$ is symplectictomorphic to $X_{\Omega_\infty}$, where $\Omega_{\infty}$ is the
the relatively open set in $\mathbb{R}_{\geqslant0}^2$
 bounded by the coordinate axes and the curve $\gamma_{\infty}$ parametrized by
 $$
 2(\sqrt{1-v^2}+v(\pi- {\rm arccos}v), \sqrt{1-v^2}-v{\rm arccos}v), \text{ for } v\in[-1,1]
 $$
(or equivaliently, $(2\sin(\alpha/2)-\alpha\cos(\alpha/2), 2\sin(\alpha/2)+ (2\pi-\alpha)\cos(\alpha/2))$ with $\alpha\in [0,2\pi]$,
 see \cite[Theorem~3]{Ra15}).
Moreover, by \cite[Proposition~8]{OsRa19}, we also know that the toric domain  $X_{\Omega_p}$
is convex for $p\in [1,2]$, and concave for $p\in [2,\infty]$.
Hence for any normalized symplectic capacity $c$ on $\mathbb{R}^4$, \cite[Theorem~1.4]{GuRa20} and \cite[Theorem~1]{OsRa19} lead to
the first two cases in (\ref{e:4.1}),
%$$
%c(\mathbb{X}_p)=2\pi(1/4)^{1/p}\;\;\forall p\in[1,2],\qquad c(\mathbb{X}_p)=\frac{4\Gamma(1+ \frac{1}{p})^2}{\Gamma(1+\frac{2}{p})}\;\;\forall p\in[2,\infty);
%$$
and the third case follows from  \cite[Theorem~1.4]{GuRa20} and \cite[Theorem 1.14]{GuH18},
 \begin{eqnarray*}
c(\mathbb{X}_{\infty})&=&\max\{[v]_{\Omega_\infty}\,|\, v\in\mathbb{Z}^2_{>0},\;\sum_iv_i=2\}\\
&=&\inf\left\{w_1+w_2\;|\;
w=(w_1, w_2)\in\partial\Omega_\infty\cap\mathbb{R}^{2}_{>0}\right\}=4.
\end{eqnarray*}
 }
\end{remark}

\begin{remark}\label{rm:3}
{\rm  Suppose that each of symplectic manifolds $X^{(1)},\cdots, X^{(m)}$ is either a convex
 toric domain or $4$-dimensional concave toric domain or equal to $\mathbb{X}_p$
 as in (\ref{e:4.1}).  Since each convex  or concave toric domain or  $\mathbb{X}_p$ is star-shaped,
 then for any normalized symplectic capacity $c$ on $\mathbb{R}^{2n}$ with $2n=\sum^m_{i=1}\dim X^{(i)}$,
  from \cite[(3.8)]{CHLS07}, Theorem~\ref{th:main} and (\ref{e:4.1})
 we derive
  $$
c_1^{\rm EH}(\prod^m_{i=1}X^{(i)})=\min_i c(X^{(i)}).
$$
 }
\end{remark}

\begin{remark}\label{rm:4}
{\rm The main result of \cite{AAKO14} is $c_{\rm EHZ}(\Delta\times\Delta^\circ)=4$
for any  bounded convex domain $\Delta\subset\mathbb{R}^n$. By this and  
(\ref{e:zon0}) and (\ref{e:zon1}) it seems to be reasonable to conjecture that
$c(\Delta\times\Delta^\circ)=4$ for any normalized symplectic capacity $c$ on $\mathbb{R}^{2n}$
and any bounded convex domain $\Delta\subset\mathbb{R}^n$.
 }
\end{remark}

%\begin{proof}
%
%
%\end{proof}

%%新的形势%%
\medskip
\begin{tabular}{l}
Laboratory of Mathematics and Complex Systems (Ministry of Education), \\
School of Mathematical Sciences, Beijing Normal University,\\
 Beijing 100875, People's Republic of China\\
 E-mail address: shikun@mail.bnu.edu.cn,\hspace{5mm}gclu@bnu.edu.cn\\
\end{tabular}

%%%旧的形势%%
%\medskip
%\begin{tabular}{l}
% School of Mathematical Sciences, Beijing Normal University\\
% Laboratory of Mathematics and Complex Systems, Ministry of Education\\
% Beijing 100875, The People's Republic of China\\
% E-mail address: 309849234@qq.com,\hspace{5mm}gclu@bnu.edu.cn\\
%\end{tabular}

\end{document}